\documentclass[11pt]{article}

\usepackage[margin=1in]{geometry}
\usepackage{cite}
\usepackage{amsmath,amssymb,amsthm,bm,bbm}
\usepackage{mathtools}
\usepackage{enumitem}
\usepackage{graphicx}
\usepackage{float}
\usepackage{microtype}
\usepackage{xcolor}
\usepackage{dsfont}
\usepackage{setspace}
\usepackage[hidelinks,bookmarks=false]{hyperref}
\theoremstyle{plain}
\newtheorem{theorem}{Theorem}[section]
\newtheorem{lemma}[theorem]{Lemma}
\newtheorem{proposition}[theorem]{Proposition}
\newtheorem{corollary}[theorem]{Corollary}

\theoremstyle{definition}
\newtheorem{definition}[theorem]{Definition}
\newtheorem{assumption}[theorem]{Assumption}

\theoremstyle{remark}
\newtheorem{remark}[theorem]{Remark}

\numberwithin{equation}{section}

\newcommand{\R}{\mathbb{R}}
\newcommand{\E}{\mathbb{E}}

\newcommand{\calX}{\mathcal{X}}
\newcommand{\calU}{\mathcal{U}}
\newcommand{\calY}{\mathcal{Y}}

\newcommand{\calW}{\mathcal{W}}
\newcommand{\calP}{\mathcal{P}}
\newcommand{\calM}{\mathcal{M}}
\newcommand{\calB}{\mathcal{B}}

\newcommand{\pushfwd}[1]{#1_{\#}}
\newcommand{\norm}[1]{\left\lVert #1 \right\rVert}

\newcommand{\id}{\mathrm{id}}

\DeclareMathOperator{\aff}{aff}
\DeclareMathOperator{\argmin}{arg\,min}
\DeclareMathOperator{\col}{col}

\title{A Measure-Theoretic Formulation of Behavioral Systems}

\author{
Victor M. Preciado\\
Department of Electrical and Systems Engineering\\
University of Pennsylvania\\
\texttt{preciado@seas.upenn.edu}
}

\date{\today}

\begin{document}
	\maketitle
	
	\begin{abstract}
		In Willems' behavioral systems theory, a dynamical system is identified with the set of all trajectories compatible with its laws of motion. In the linear time-invariant setting this trajectory set is a linear subspace, and its algebraic structure underpins the Fundamental Lemma: a single persistently exciting data trajectory generates the entire finite-horizon behavior. For nonlinear or stochastic systems, however, the admissible
		trajectory set is generally nonconvex, obstructing direct optimization over the behavior.
			In this paper, we lift the behavioral viewpoint from trajectories to probability measures on trajectories by representing a finite-horizon dynamical system with the set of all Borel probability measures supported on its admissible trajectories. For deterministic systems, this behavioral-measure set is convex and weakly closed even when the dynamics are nonlinear, because convex combinations of trajectory distributions remain dynamically admissible even when convex combinations of trajectories
			do not. Its extreme points are precisely the Dirac masses on individual admissible trajectories, so the classical deterministic theory is embedded as the extremal skeleton of the richer measure-valued object.
		On this foundation we establish two core deterministic results and outline a stochastic extension based on history-conditional kernel consistency. First, optimal control for a prescribed initial distribution becomes a linear program over occupation measures whose dual is exactly Bellman's dynamic-programming recursion, with strong duality under compactness and continuity. Second, for controllable linear time-invariant systems under persistency of excitation,
		we prove a measure-level Fundamental Lemma: every probability measure on the finite-horizon behavior factors through the data Hankel matrix, reducing any optimization over trajectory distributions to an equivalent optimization over coefficient-space distributions. This is an exact data-driven
		reformulation requiring no identified model, provided a single noise-free trajectory satisfies the standard persistency-of-excitation condition; the classical Fundamental Lemma is recovered as the special case of Dirac measures.
	\end{abstract}
	
	\paragraph{Keywords.}
	Behavioral systems, measure-theoretic systems, occupation measures,
	convex analysis, stochastic systems, data-driven control.
	
	\section{Introduction}
	
	The behavioral approach to systems theory, introduced by
	Willems~\cite{Willems1991}, identifies a dynamical system with the set of all trajectories compatible with its laws of motion, called the \emph{behavior}. For LTI systems the behavior is a linear subspace, which makes it possible to characterize the system directly from its trajectories without committing to a particular state-space realization. The Fundamental Lemma~\cite{Willems2005Fundamental} exploits this structure to show that a single persistently exciting data trajectory generates the entire finite-horizon behavior of a controllable LTI system.
	This result has become a cornerstone of data-driven control, underpinning methods such as DeePC~\cite{Coulson2019,Berberich2021}, data-driven
	simulation~\cite{MarkovskyWillems2008}, and the informativity
	framework~\cite{vanWaarde2020Informativity,Trentelman2022Informativity,vanWaarde2023Informativity}; see~\cite{Faulwasser2023Behavioral} for a stochastic extension via polynomial chaos expansions.

	The applicability of convex optimization tools to control problems using the behavioral viewpoint, however, depends on the trajectory set being convex. For nonlinear systems, admissible trajectories generally do not form a convex set, so convex optimization tools cannot be applied directly to control
	problems formulated over the behavior.
	
		In this paper, we lift the behavioral viewpoint from trajectories to
		probability measures on trajectories by introducing a \emph{behavioral-measure set} as the set of all Borel probability measures supported on admissible finite-horizon trajectories.
		This trajectory-level description is essential for the data-driven result developed later: it allows Willems' Fundamental Lemma to be lifted from individual trajectories to probability measures on the entire finite-horizon behavior.
		We prove that this set is convex and weakly closed even for nonlinear dynamics. Its extreme points are precisely the Dirac masses on individual admissible trajectories, so the classical deterministic behavior is embedded as the extremal skeleton of this richer measure-valued object.
	
	Working at the level of probability measures on full trajectories serves two
	roles. First, it yields a single convex, weakly closed object whose extreme
	points recover the classical deterministic behavior and whose occupation
	marginals support the optimal-control formulation of
	Section~\ref{sec:optimal_control}. Second, it enables a genuine lift of
	Willems' Fundamental Lemma from individual trajectories to trajectory
	distributions in the controllable LTI setting; Section~\ref{sec:extensions}
	then outlines a stochastic extension based on history-conditional kernel consistency.
	
	Under persistency of excitation, every probability measure supported on the
	finite-horizon behavior of a controllable LTI system can be generated by a
	probability distribution over the coefficient vector that parametrizes
	trajectories through a data Hankel matrix, and conversely. This upgrades the
	classical Fundamental Lemma from a representation of individual trajectories
	to a representation of entire trajectory distributions through the same Hankel
	architecture; the precise statement is given in
	Theorem~\ref{thm:measure_level_FL}.
	
	Our approach is related to three existing lines of work. The first is the
	occupation-measure literature
	\cite{Vinter1993,Lasserre2008,Lasserre2010,HenrionLasserreSavorgnan2008,Henrion2014,KordaHenrion2020},
	which restores convexity to nonlinear optimal control by reformulating the
	problem as a linear program over measures satisfying flow constraints. The key
	difference is that these formulations work with occupation measures on
	state-control space rather than with a single probability measure on full
	trajectory space. They capture marginal or time-aggregated distributional
	information and are sufficient for optimal-cost computation, but they do not
	determine the full temporal coupling of the trajectory law and therefore do
	not support a measure-level Fundamental Lemma.
	
	The second line of work is the classical theory of relaxed controls and Young
	measures~\cite{Young1969,Warga1972}, which also operates at the per-step
	level. By contrast, the behavioral-measure set proposed here is a single
	probability measure on the full trajectory space, coupling all time stages
	simultaneously.
	
	The third line of work is Willems' exploration of behavioral ideas for
	stochastic systems~\cite{Willems2013Open}. The present work extends that
	viewpoint to controlled dynamical systems through a measure-theoretic
	formulation. Classical occupation measures and relaxed controls work with
	per-step or time-aggregated marginals, whereas the behavioral-measure set
	retains the full trajectory law needed for the LTI factorization result.

		The main contributions are the following.
		\begin{enumerate}[label=\arabic*),leftmargin=*]
		\item We introduce the \emph{behavioral-measure set} $\mathcal{M}_{\mathcal{B}}$ on finite-horizon trajectory space and establish its basic structural properties: convexity, weak closedness, and an exact extreme-point characterization. We also relate this set to moment-SOS approximations by distinguishing weak operator identities from the polynomial graph-ideal constraints used to approximate graph-supported measures in the Lasserre hierarchy~\cite{Lasserre2008,Lasserre2010}.
		\item For a fixed initial distribution, we formulate optimal control as a linear
		program over occupation measures derived from the behavioral-measure set and
		prove strong duality with Bellman's dynamic-programming
		recursion~\cite{Bertsekas2012}. A
		policy-extraction corollary recovers a measurable optimal feedback law from any
		optimal measure via complementary slackness and disintegration (i.e., decomposition into
		conditional kernels).
			\item For controllable LTI systems under persistency of excitation and a noise-free informative data trajectory, we prove a measure-level Fundamental Lemma: every probability measure supported on the finite-horizon external behavior admits an exact Hankel factorization through a coefficient-space distribution, and conversely. The classical Fundamental Lemma is recovered as the Dirac special case, and optimization over behavioral measures reduces exactly to data-driven optimization over coefficient-space distributions.
		\end{enumerate}
		Sections~\ref{sec:behavioral_measures}--\ref{subsec:lti_fundamental_lemma}
		develop the deterministic framework, while
		Section~\ref{sec:extensions} outlines a stochastic extension based on
		history-conditional kernel consistency.
		Three numerical studies illustrate moment-SOS feasible-set structure,
		nonlinear control synthesis including a distributional-initial-condition
		variant, and data-driven Hankel validation.
	
	Section~\ref{sec:behavioral_measures} introduces the behavioral-measure set
	and its structural properties. Section~\ref{sec:optimal_control} develops the
	occupation-measure optimal-control problem, and
	Section~\ref{sec:structural} covers compactness, the LTI specialization, and
	the stochastic extension. Section~\ref{sec:numerics} reports the numerical
	studies, and Section~\ref{sec:conclusions} concludes.

	\section{Behavioral Measures}
	\label{sec:behavioral_measures}
	
	This section introduces the behavioral-measure set and establishes its basic
	structural properties. We begin by fixing the setting and notation used
	throughout the paper.
	
	\begin{assumption}[Standing conventions]
		\label{ass:standing}
		Fix a finite horizon $T\in\mathbb{N}$. The state space $\calX$, input space
		$\calU$, and output space $\calY$ are Polish. For any Polish space $S$, we
		write $\calP(S)$ for the set of Borel probability measures on $S$. The
		dynamics map $f:\calX\times\calU\to\calX$ and output map
		$h:\calX\times\calU\to\calY$ are continuous unless stated otherwise.
		Compactness assumptions are introduced explicitly when needed for the
		Bellman-duality and existence results in
		Sections~\ref{sec:optimal_control}--\ref{sec:structural}.
	\end{assumption}
	
	Consider the discrete-time controlled system
	\begin{equation}
		\label{eq:dynamics}
		x_{t+1} = f(x_t,u_t), \;\; y_t = h(x_t,u_t), \quad t=0,\dots,T-1,
	\end{equation}
	where $x_t\in\calX$ is the state, $u_t\in\calU$ is the control input, and
	$y_t\in\calY$ is the output. We work on the finite-horizon trajectory space
	\[
	\Omega_T := \calX^{T+1}\times \calU^T\times \calY^T,
	\]
	equipped with its product Borel $\sigma$-algebra. For
	$\omega=(x_{0:T},u_{0:T-1},y_{0:T-1})\in\Omega_T$, let
	$X_t(\omega)=x_t$, $U_t(\omega)=u_t$, and $Y_t(\omega)=y_t$ denote the
	canonical coordinate maps.

	\subsection{Behavioral Measures and Operator Consequences}
	
	In the classical behavioral framework, the finite-horizon behavior is the set
	of all trajectories satisfying the dynamics pointwise. We now define the
	measure-theoretic counterpart: the set of all probability measures supported
	on admissible trajectories.
	
		Given a measurable map $\varphi$ and a measure $\mu$, we write
		$\varphi_{\#}\mu$ for the \emph{pushforward measure}, defined by
		$\varphi_{\#}\mu(A) = \mu(\varphi^{-1}(A))$ for every Borel set $A$.
	
	\begin{definition}[Behavioral-measure set]
		\label{def:behavioral_measure}
		The \emph{admissible path set} is defined as
		\begin{equation*}
			\mathfrak{B}_T :=
			\Bigl\{\omega\in\Omega_T :
			X_{t+1}(\omega)=f(X_t(\omega),U_t(\omega)),\\
			Y_t(\omega)=h(X_t(\omega),U_t(\omega)),
			\; t=0,\dots,T-1\Bigr\}.
		\end{equation*}
		The \emph{behavioral-measure set} is then defined as
		\[
		\mathcal{M}_{\mathcal{B}} := \calP(\mathfrak{B}_T)
		= \bigl\{\mu\in\calP(\Omega_T): \mu(\mathfrak{B}_T)=1\bigr\}.
		\]
		For a prescribed initial law $\rho_0\in\calP(\calX)$, representing the
		probability distribution of the initial state, the corresponding
		\emph{initial slice} is
		\[
		\mathcal{M}_{\mathcal{B}}(\rho_0)
		:=\bigl\{\mu\in\mathcal{M}_{\mathcal{B}} : (X_0)_{\#}\mu=\rho_0\bigr\}.
		\]
			A deterministic initial condition $x_0 = \bar{x}$ corresponds to a Dirac measure located at $\bar{x}$, i.e., $\rho_0 = \delta_{\bar{x}}$.
	\end{definition}
	
	Definition~\ref{def:behavioral_measure} depends only on the system dynamics
	and not on any particular cost function, initial condition, or terminal
	constraint. The behavioral-measure set $\mathcal{M}_{\mathcal{B}}$ is therefore
	a description of the system itself, independent of any control problem posed
	over it. The set $\mathcal{M}_{\mathcal{B}}$ describes the system for all
	possible initial conditions simultaneously; fixing a particular initial
	distribution $\rho_0$ selects the subset
	$\mathcal{M}_{\mathcal{B}}(\rho_0) \subseteq \mathcal{M}_{\mathcal{B}}$ of
	measures consistent with that initial law, but does not alter the underlying
	system description. This distinction will be important when characterizing the
	extreme points of $\mathcal{M}_{\mathcal{B}}$ in
	Proposition~\ref{prop:extreme_points}, where we show that the extreme points
	are Dirac masses on individual admissible trajectories, a characterization that
	does not hold on the slice $\mathcal{M}_{\mathcal{B}}(\rho_0)$ when $\rho_0$
	is not itself a Dirac mass.

	\begin{remark}[Realization dependence]
		\label{rem:realization_dependence}
		In contrast with the classical behavioral framework, which operates directly
		on external signals without choosing a state-space realization, the
		behavioral-measure set $\calM_\calB$ is defined relative to a chosen
		realization $(f,h)$. This is the standard tradeoff of any state-space
		formulation: working with states enables the structural results of
		Sections~\ref{sec:behavioral_measures}--\ref{sec:optimal_control} (convexity,
		duality, policy extraction) for general nonlinear dynamics, at the cost of
			committing to a particular realization. A fully realization-free
			measure-theoretic formulation on external-signal space is outside the
			scope of this paper; Subsection~\ref{subsec:lti_fundamental_lemma}
			shows that such a formulation is available in the LTI case through the
			external behavior $\calB_L$.
	\end{remark}

	Although the behavioral-measure set is defined by a support condition on the
	admissible path set, it also satisfies a family of weak operator identities.
		These identities play a central role in the moment-SOS relaxations of
		Section~\ref{sec:numerics} and the duality results of
		Section~\ref{sec:optimal_control}.
	
		Let $C_b(\calX)$ and $C_b(\calY)$ denote the spaces of bounded continuous
		real-valued functions on $\calX$ and $\calY$, respectively, and let
		$d_{\calX}$ and $d_{\calY}$ denote compatible bounded metrics on
		$\calX$ and $\calY$.
	
	\begin{proposition}[Graph support and operator identities]
		\label{prop:operator_identities}
		For each $t=0,\dots,T-1$, let $\varphi\in C_b(\calX)$ and
		$\psi\in C_b(\calY)$ be arbitrary bounded continuous test functions on the
		state and output spaces, respectively. Define the operators
		$\mathcal{L}_t:C_b(\calX)\to C_b(\Omega_T)$ and
		$\mathcal{H}_t:C_b(\calY)\to C_b(\Omega_T)$ by
		\begin{align}
			\label{eq:L_operator}
			(\mathcal{L}_t\varphi)(\omega)
			&:= \varphi(X_{t+1}(\omega))
			- \varphi\bigl(f(X_t(\omega),U_t(\omega))\bigr),\\
			\label{eq:H_operator}
			(\mathcal{H}_t\psi)(\omega)
			&:= \psi(Y_t(\omega))
			- \psi\bigl(h(X_t(\omega),U_t(\omega))\bigr).
		\end{align}
		For every Borel probability measure on the trajectory space,
		$\mu\in\calP(\Omega_T)$, the following conditions satisfy
		$\textup{(i)}\Leftrightarrow\textup{(ii)}\Rightarrow\textup{(iii)}$.
		The reverse implication
		$\textup{(iii)}\Rightarrow\textup{(i)}$ fails in general.
		\begin{enumerate}[label=(\roman*),leftmargin=*]
			\item \emph{Graph support:} $\mu$ is supported on the admissible path set, i.e.,
			$\mu\in\mathcal{M}_{\mathcal{B}}$ or equivalently $\mu(\mathfrak{B}_T)=1$.
			
			\item \emph{Metric residual:} the average squared mismatch between the actual
			and predicted states and outputs is zero under $\mu$. That is, for every
			$t=0,\dots,T-1$,
			\begin{equation}
				\label{eq:nonnegative_residual}
				\int_{\Omega_T}
				\Bigl[
				d_{\calX}\bigl(X_{t+1},f(X_t,U_t)\bigr)^2\\
				+\;
				d_{\calY}\bigl(Y_t,h(X_t,U_t)\bigr)^2
				\Bigr] d\mu = 0.
			\end{equation}
			
			\item \emph{Weak operator identities:} the distributions of $X_{t+1}$ and
			$f(X_t,U_t)$ under $\mu$ agree, as do those of $Y_t$ and $h(X_t,U_t)$. That
			is, for every $t=0,\dots,T-1$, every $\varphi\in C_b(\calX)$, and every
			$\psi\in C_b(\calY)$,
			\begin{equation}
				\label{eq:weak_consistency}
				\int_{\Omega_T}\mathcal{L}_t\varphi\,d\mu = 0,
				\qquad
				\int_{\Omega_T}\mathcal{H}_t\psi\,d\mu = 0.
			\end{equation}
		\end{enumerate}
	\end{proposition}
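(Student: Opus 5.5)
The plan is to prove (i)$\Leftrightarrow$(ii) by writing the admissible path set as the zero set of a nonnegative continuous residual, then derive (iii) from (i) pointwise, and finally give an explicit counterexample to (iii)$\Rightarrow$(i).

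\textbf{Closedness of $\mathfrak{B}_T$.} Define
\[
R_t(\omega):=d_{\calX}\bigl(X_{t+1}(\omega),f(X_t(\omega),U_t(\omega))\bigr)^2+d_{\calY}\bigl(Y_t(\omega),h(X_t(\omega),U_t(\omega))\bigr)^2 .
\]
Because $f,h$ are continuous, the coordinate maps are continuous, and the metrics are continuous, each $R_t$ is a bounded, nonnegative, continuous function on $\Omega_T$. Since $d_{\calX}$ and $d_{\calY}$ are metrics, $R_t(\omega)=0$ holds exactly when $X_{t+1}=f(X_t,U_t)$ and $Y_t=h(X_t,U_t)$ at $\omega$. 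Hence
\[
\mathfrak{B}_T=\bigcap_{t=0}^{T-1}R_t^{-1}(\{0\}),
\]
so $\mathfrak{B}_T$ is closed, and in particular Borel.

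\textbf{(i)$\Rightarrow$(ii).} If $\mu(\mathfrak{B}_T)=1$, then $R_t=0$ $\mu$-a.e. Therefore $\int R_t\,d\mu=0$ for every $t$.

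\textbf{(ii)$\Rightarrow$(i).} A nonnegative measurable function with zero integral vanishes $\mu$-a.e. So $\mu(R_t^{-1}(\{0\}))=1$ for each $t$. A finite intersection of full-measure sets has full measure, which gives $\mu(\mathfrak{B}_T)=1$.

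\textbf{(i)$\Rightarrow$(iii).} On $\mathfrak{B}_T$, $X_{t+1}=f(X_t,U_t)$ pointwise. Hence $(\mathcal{L}_t\varphi)(\omega)=0$ for every $\omega\in\mathfrak{B}_T$, and likewise $(\mathcal{H}_t\psi)(\omega)=0$. Both functions are bounded and continuous, and they vanish $\mu$-a.e., so their integrals are zero. This is equivalent to equality of the pushforward laws $(X_{t+1})_\#\mu=(f(X_t,U_t))_\#\mu$, since bounded continuous functions determine Borel probability measures on Polish spaces.

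\textbf{Failure of (iii)$\Rightarrow$(i).} This is the only step needing care. Condition (iii) only constrains marginal laws, while (i) requires an almost-sure coupling. I would take $T=1$, $\calX=\calU=\calY=\R$, $f(x,u)=u$, and $h(x,u)=0$. Let $\mu$ be the law of $(X_0,U_0,X_1,Y_0)=(0,U,U',0)$, where $U$ and $U'$ are independent Rademacher variables, each equal to $\pm1$ with probability $1/2$.

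Then $X_1$ and $f(X_0,U_0)=U_0$ have the same law, and $Y_0=h\equiv0$. So both identities in \eqref{eq:weak_consistency} hold. However,
\[
\mu(X_1=U_0)=1/2<1,
\]
so $\mu\notin\mathcal{M}_{\mathcal{B}}$. This shows the reverse implication fails in general, and no structural assumption beyond continuity is needed for the forward chain.
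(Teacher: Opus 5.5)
Your proposal is correct and follows essentially the same route as the paper: you use the residual $R_t$ (the paper's $g_t$), whose zero set cuts out $\mathfrak{B}_T$, a nonnegativity argument for (i)$\Leftrightarrow$(ii), pointwise vanishing of the operators for (iii), and a $T=1$, $f(x,u)=u$ counterexample with $X_1$ independent of $U_0$. The only differences are cosmetic: you derive (iii) from (i) rather than from (ii), and your Rademacher construction gives $\mu(\mathfrak{B}_1)=1/2$ where the paper's independent uniforms give $\mu(\mathfrak{B}_1)=0$; either suffices.
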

	
	\begin{proof}
		For each $t=0,\dots,T-1$, define
		\begin{equation*}
			g_t(\omega)
			:=
			d_{\calX}\bigl(X_{t+1}(\omega),f(X_t(\omega),U_t(\omega))\bigr)^2\\
			+\;
			d_{\calY}\bigl(Y_t(\omega),h(X_t(\omega),U_t(\omega))\bigr)^2.
		\end{equation*}
		This function is continuous and nonnegative, and $g_t(\omega)=0$ if and
		only if
		$X_{t+1}(\omega)=f(X_t(\omega),U_t(\omega))$ and
		$Y_t(\omega)=h(X_t(\omega),U_t(\omega))$. Hence
		${\mathfrak{B}_T = \bigcap_{t=0}^{T-1} g_t^{-1}(\{0\})}$.
		
		\textup{(i)}$\Leftrightarrow$\textup{(ii).}
		If $\mu(\mathfrak{B}_T)=1$, then $g_t=0$ $\mu$-almost surely for every
		$t$, so~\eqref{eq:nonnegative_residual} holds. Conversely,
		\eqref{eq:nonnegative_residual} and $g_t\ge 0$ imply
		$g_t=0$ $\mu$-almost surely for every $t$.
		Since there are finitely many time indices,
		$\mu(\mathfrak{B}_T) = \mu\bigl(\bigcap_{t} g_t^{-1}(\{0\})\bigr) = 1$.
		
		\textup{(ii)}$\Rightarrow$\textup{(iii).}
		If~\eqref{eq:nonnegative_residual} holds, then
		$X_{t+1}=f(X_t,U_t)$ and $Y_t=h(X_t,U_t)$ $\mu$-almost surely, so
		$\mathcal{L}_t\varphi=\mathcal{H}_t\psi=0$ $\mu$-almost surely for all
		$\varphi\in C_b(\calX)$ and $\psi\in C_b(\calY)$. Integrating yields
		\eqref{eq:weak_consistency}.
		
		\textup{(iii)}$\not\Rightarrow$\textup{(i).}
		Take $T=1$, $\calX=\calU=[0,1]$, $\calY=\{0\}$, $f(x,u)=u$, and
		$h(x,u)=0$. Let $X_0=0$ deterministically, let
		$U_0\sim\mathrm{Unif}[0,1]$, and let $X_1\sim\mathrm{Unif}[0,1]$ be
		independent of $U_0$. Denote by $\mu$ the induced law on $\Omega_1$.
		Since $X_1$ and $U_0=f(X_0,U_0)$ have the same distribution,
		$\int \mathcal{L}_0\varphi\,d\mu
		= \mathbb{E}[\varphi(X_1)] - \mathbb{E}[\varphi(U_0)] = 0$
		for every $\varphi\in C_b([0,1])$. The $\mathcal{H}_0$ identity holds
		trivially. Thus~\eqref{eq:weak_consistency} holds, but
		$\mu(\mathfrak{B}_1)=\mathbb{P}(X_1=U_0)=0$ since $X_1$ and $U_0$ are
		independent continuous random variables. The weak identities match
		marginals but do not enforce graph support.
	\end{proof}
	
	\begin{remark}[Operator constraints and moment-SOS outer approximations]
		\label{rem:operator_outer_approx}
		The weak operator identities~\eqref{eq:weak_consistency} are necessary
		consequences of the graph-support condition defining
		$\mathcal{M}_{\mathcal{B}}$, but they do not enforce graph support.
		For example,
		\[
		\int \varphi(X_{t+1})\,d\mu
		=
		\int \varphi(f(X_t,U_t))\,d\mu
		\]
		matches the two pushforward marginals and can hold even when
		$X_{t+1}\ne f(X_t,U_t)$ with positive probability. When $f$ and $h$ are
		polynomial, a Lasserre relaxation intended to approximate graph-supported
		measures should impose the truncated polynomial ideal constraints
		generated by
		\[
		X_{t+1}-f(X_t,U_t)=0,\qquad
		Y_t-h(X_t,U_t)=0,
		\]
		with polynomial multipliers up to the relaxation degree. In scalar
		notation this includes constraints of the form
		\[
		\int q(x_t,u_t,x_{t+1})
		\bigl(x_{t+1}-f(x_t,u_t)\bigr)\,d\mu=0
		\]
		for all admissible monomial multipliers $q$, together with the usual
		moment and localizing positive-semidefinite constraints
		\cite{Lasserre2008,Lasserre2010}. Weak marginal identities alone
		therefore define only an outer consistency relaxation.
	\end{remark}
	
	For optimal control it is useful to extract per-step information from a
	behavioral measure. The next proposition provides the bridge to the
	occupation-measure formulation of Section~\ref{sec:optimal_control}.
	
	\begin{proposition}[Occupation representation and reconstruction]
		\label{prop:occupation_representation}
		Fix an initial distribution $\rho_0\in\calP(\calX)$ and a behavioral measure
		$\mu\in\mathcal{M}_{\mathcal{B}}(\rho_0)$. Let $\pi_X:\calX\times\calU\to
		\calX$ denote the projection $\pi_X(x,u)=x$. For each $t=0,\dots,T-1$,
		define the state distribution at time $t$ and the joint state-input
		distribution at time $t$ as
		\[
		\rho_t := (X_t)_{\#}\mu \in \calP(\calX), \qquad
		\lambda_t := (X_t,U_t)_{\#}\mu \in \calP(\calX\times\calU).
		\]
		These distributions satisfy the following three flow constraints for every
		$t=0,\dots,T-1$:
		\begin{enumerate}
			\item The state distribution $\rho_t$ is the marginal of
			$\lambda_t$ over the input:
			\begin{equation}
				\label{eq:flow_marginal}
				(\pi_X)_{\#}\lambda_t = \rho_t.
			\end{equation}
			\item The state distribution propagates through the dynamics:
			\begin{equation}
				\label{eq:flow_dynamics}
				\rho_{t+1} = f_{\#}\lambda_t.
			\end{equation}
			\item The output is determined by the state and input through $h$:
			\begin{equation}
				\label{eq:flow_output}
				(X_t,U_t,Y_t)_{\#}\mu = (\mathrm{id},h)_{\#}\lambda_t.
			\end{equation}
		\end{enumerate}
			Conversely, suppose measures $\{\rho_t\}_{t=0}^{T}$ and
			$\{\lambda_t\}_{t=0}^{T-1}$ satisfy~\eqref{eq:flow_marginal}
			and~\eqref{eq:flow_dynamics} for a prescribed $\rho_0$, and that the
			output is determined by $y_t = h(x_t,u_t)$. Then, there exists a behavioral measure
			$\mu\in\mathcal{M}_{\mathcal{B}}(\rho_0)$ whose state and state-input
			marginals are exactly $\rho_t$ and $\lambda_t$, and whose output marginals
			satisfy~\eqref{eq:flow_output}.
	\end{proposition}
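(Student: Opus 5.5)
The forward direction is a direct pushforward computation, and the converse is a gluing construction by disintegration; I will do them in that order.

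\emph{Forward direction.} For \eqref{eq:flow_marginal}, note that $\pi_X\circ(X_t,U_t)=X_t$, so $(\pi_X)_{\#}\lambda_t=(X_t)_{\#}\mu=\rho_t$. For \eqref{eq:flow_dynamics}, use that $\mu\in\mathcal{M}_{\mathcal{B}}$ means $X_{t+1}=f(X_t,U_t)$ $\mu$-almost surely; this is the equivalence (i)$\Leftrightarrow$(ii) of Proposition~\ref{prop:operator_identities}. Two maps that agree $\mu$-a.s.\ have the same pushforward, so
\[
\rho_{t+1}=(X_{t+1})_{\#}\mu=(f\circ(X_t,U_t))_{\#}\mu=f_{\#}\lambda_t .
\]
For \eqref{eq:flow_output}, the maps $(X_t,U_t,Y_t)$ and $(\mathrm{id},h)\circ(X_t,U_t)$ also agree $\mu$-a.s., so their pushforwards coincide and equal $(\mathrm{id},h)_{\#}\lambda_t$. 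The initial condition $(X_0)_{\#}\mu=\rho_0$ holds by definition of the slice.

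\emph{Converse.} Since $\calX$ and $\calU$ are Polish, I disintegrate each $\lambda_t$ with respect to its $x$-marginal $\rho_t$, which is legitimate by \eqref{eq:flow_marginal}. This gives a Borel kernel $\kappa_t(du\mid x)$ with $\lambda_t(dx\,du)=\rho_t(dx)\kappa_t(du\mid x)$. I then build a Markov chain on $\calX\times\calU$ via Ionescu--Tulcea, or simply by iterated kernel composition since the horizon is finite:
\begin{itemize}[leftmargin=*]
\item draw $x_0\sim\rho_0$;
\item given $x_t$, draw $u_t\sim\kappa_t(\cdot\mid x_t)$;
\item set $x_{t+1}=f(x_t,u_t)$ and $y_t=h(x_t,u_t)$.
\end{itemize}
Let $\nu$ be the law of $(x_0,u_0,\dots,x_{T-1},u_{T-1})$, and define $\mu:=\Phi_{\#}\nu$, where $\Phi$ fills in $x_{t+1}=f(x_t,u_t)$ and $y_t=h(x_t,u_t)$. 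The map $\Phi$ is continuous, hence Borel, because $f$ and $h$ are continuous. Every point in the image of $\Phi$ lies in $\mathfrak{B}_T$, so $\mu(\mathfrak{B}_T)=1$. Note that $\mathfrak{B}_T$ is closed, being the zero set of the continuous functions $g_t$.

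\emph{Marginals of $\mu$.} I prove by induction on $t$ that $(X_t)_{\#}\mu=\rho_t$ and $(X_t,U_t)_{\#}\mu=\lambda_t$.
\begin{itemize}[leftmargin=*]
\item \emph{Base case.} $(X_0)_{\#}\mu=\rho_0$ by construction.
\item \emph{Inductive step.} If $(X_t)_{\#}\mu=\rho_t$, then because $u_t$ is drawn from $\kappa_t(\cdot\mid x_t)$ given the past, $(X_t,U_t)_{\#}\mu=\rho_t\otimes\kappa_t=\lambda_t$. It follows that $(X_{t+1})_{\#}\mu=f_{\#}\lambda_t=\rho_{t+1}$ by \eqref{eq:flow_dynamics}.
\end{itemize}
This covers $t=0,\dots,T$ for the states and $t=0,\dots,T-1$ for the state--input pairs. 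The output identity \eqref{eq:flow_output} then follows from the forward direction applied to this $\mu$.

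\emph{Main obstacle.} The only delicate point is the existence and measurability of the kernels $\kappa_t$ and of the composed measure. This is where the Polish assumption is used: the disintegration theorem applies to $\lambda_t$ on $\calX\times\calU$, and the kernel is defined only $\rho_t$-a.e. That is harmless, because at each step the law of $x_t$ is exactly $\rho_t$, which is precisely what the induction guarantees. I would make this matching explicit in the inductive step, since using the kernel $\kappa_t$ outside a $\rho_t$-full set would break the argument.
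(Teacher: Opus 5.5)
Your proposal is correct and follows essentially the same route as the paper. In the forward direction, the flow identities come from $X_{t+1}=f(X_t,U_t)$ and $Y_t=h(X_t,U_t)$ holding $\mu$-a.s.; you phrase this via pushforwards of a.s.-equal maps, the paper via test-function integrals. In the converse, both arguments disintegrate $\lambda_t=\rho_t\otimes\kappa_t$, build the Markov-policy path law by Ionescu--Tulcea, and check the marginals by induction using \eqref{eq:flow_marginal}--\eqref{eq:flow_dynamics}. Writing $\mu=\Phi_{\#}\nu$ instead of inserting the Dirac kernels $\delta_{f(x_t,u_t)}$ and $\delta_{h(x_t,u_t)}$ into the product is purely cosmetic. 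One small precision: if $\Phi$ keeps the sampled intermediate states rather than recomputing them, then support on $\mathfrak{B}_T$ holds $\nu$-a.s., not for every point of the image.
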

	
	\begin{proof}
		Let $\mu\in\mathcal{M}_{\mathcal{B}}(\rho_0)$. Identity
		\eqref{eq:flow_marginal} is immediate from the definitions. For any
		bounded continuous $\varphi\in C_b(\calX)$,
		\begin{align*}
			\int_{\calX}\varphi(x)\,d\rho_{t+1}(x)
			&= \int_{\Omega_T}\varphi(X_{t+1})\,d\mu \\
			&= \int_{\Omega_T}\varphi(f(X_t,U_t))\,d\mu \\
			&= \int_{\calX\times\calU}\varphi(f(x,u))\,d\lambda_t(x,u),
		\end{align*}
		where the second equality uses $\mu(\mathfrak{B}_T)=1$. Hence
		\eqref{eq:flow_dynamics} holds. Likewise, because
		$Y_t=h(X_t,U_t)$ $\mu$-almost surely,
		\[
		\int_{\Omega_T}\zeta(X_t,U_t,Y_t)\,d\mu
		= \int_{\calX\times\calU}\zeta(x,u,h(x,u))\,d\lambda_t(x,u)
		\]
		for every bounded Borel $\zeta$, which proves~\eqref{eq:flow_output}.
		
		Conversely, disintegration~\cite[Thm.~33.3 and Problem~33.9(b)]{BillingsleyPM1995}
		yields Borel stochastic kernels $\kappa_t(\cdot\mid x)$ such that
		\[
		\lambda_t(dx,du)=\rho_t(dx)\,\kappa_t(du\mid x), \qquad t=0,\dots,T-1.
		\]
			The standard finite-horizon Ionescu--Tulcea construction then yields
		\begin{equation}
			\label{eq:markov_policy_lift}
			\mu(dx_{0:T},du_{0:T-1},dy_{0:T-1})
				:= \rho_0(dx_0)\prod_{t=0}^{T-1}
			\kappa_t(du_t\mid x_t)\\
			\times \delta_{f(x_t,u_t)}(dx_{t+1})\,
			\delta_{h(x_t,u_t)}(dy_t).
		\end{equation}
		This measure is supported on $\mathfrak{B}_T$. Moreover, by induction using
		\eqref{eq:flow_marginal} and \eqref{eq:flow_dynamics}, its state and
		state-input marginals are $\rho_t$ and $\lambda_t$, respectively. Its
		$(X_t,U_t,Y_t)$-marginals are $(\mathrm{id},h)_{\#}\lambda_t$.
	\end{proof}
	
	\begin{remark}[Non-uniqueness of reconstruction]
		\label{rem:nonuniqueness_reconstruction}
		The reconstruction \eqref{eq:markov_policy_lift} is the canonical
		Markov-policy lift induced by the kernels $\kappa_t(du\mid x)$. It
		reproduces the prescribed one-step marginals but need not preserve the
		temporal couplings of an arbitrary behavioral measure with the same
		per-step marginals.
	\end{remark}

	\subsection{Structural Properties}
	
		We now establish convexity and weak closedness of the behavioral-measure
		set.
	
	\begin{theorem}[Convexity and weak closedness]
		\label{thm:convexity_closedness}
		Under Assumption~\ref{ass:standing}, the behavioral-measure set
		$\mathcal{M}_{\mathcal{B}}$ and its initial slice
		$\mathcal{M}_{\mathcal{B}}(\rho_0)$ are convex and closed under weak
		convergence in $\calP(\Omega_T)$.
	\end{theorem}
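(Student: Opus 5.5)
The argument rests on one observation: $\mathfrak{B}_T$ is a closed subset of $\Omega_T$. By the proof of Proposition~\ref{prop:operator_identities}, $\mathfrak{B}_T=\bigcap_{t=0}^{T-1} g_t^{-1}(\{0\})$. Each $g_t$ is continuous on $\Omega_T$ because $f$, $h$, the coordinate maps and the metrics $d_\calX,d_\calY$ are continuous (Assumption~\ref{ass:standing}). So $\mathfrak{B}_T$ is a finite intersection of closed sets, hence closed; in particular it is Borel, so $\mu(\mathfrak{B}_T)$ makes sense.

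\emph{Convexity.} Take $\mu_1,\mu_2\in\calM_\calB$ and $\theta\in[0,1]$. Then
\[
(\theta\mu_1+(1-\theta)\mu_2)(\mathfrak{B}_T)=\theta\mu_1(\mathfrak{B}_T)+(1-\theta)\mu_2(\mathfrak{B}_T)=1,
\]
so $\calM_\calB$ is convex. For the slice, the pushforward is linear: $(X_0)_\#(\theta\mu_1+(1-\theta)\mu_2)=\theta(X_0)_\#\mu_1+(1-\theta)(X_0)_\#\mu_2$. If both measures lie in $\calM_\calB(\rho_0)$, this equals $\theta\rho_0+(1-\theta)\rho_0=\rho_0$, so the slice is convex as well.

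\emph{Weak closedness.} Let $\mu_n\in\calM_\calB$ with $\mu_n\Rightarrow\mu$ in $\calP(\Omega_T)$. Since $\mathfrak{B}_T$ is closed, the Portmanteau theorem gives
\[
\mu(\mathfrak{B}_T)\ge\limsup_n\mu_n(\mathfrak{B}_T)=1,
\]
so $\mu\in\calM_\calB$. An alternative is to use criterion (ii) of Proposition~\ref{prop:operator_identities}. The functions $g_t$ are bounded, because the metrics are bounded, and continuous, so $\int g_t\,d\mu_n\to\int g_t\,d\mu$. Each $\int g_t\,d\mu_n$ is zero, hence $\int g_t\,d\mu=0$.

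For the slice, $X_0:\Omega_T\to\calX$ is continuous. By the continuous mapping theorem, $(X_0)_\#\mu_n\Rightarrow(X_0)_\#\mu$. Each $(X_0)_\#\mu_n$ equals $\rho_0$, and weak limits in $\calP(\calX)$ are unique because $\calX$ is metrizable, so $(X_0)_\#\mu=\rho_0$. Combined with the previous step, $\mu\in\calM_\calB(\rho_0)$.

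The only step with real content is closedness of $\mathfrak{B}_T$, and it is exactly where continuity of $f$ and $h$ is used. Without continuity, $\mathfrak{B}_T$ would still be Borel, since graphs of Borel maps into Polish spaces are Borel, and convexity would survive. Weak closedness, however, could fail, because the Portmanteau inequality requires a closed set. Everything else is routine linearity and standard weak-convergence facts.
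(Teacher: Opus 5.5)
Your proof is correct and follows the same route as the paper. Convexity comes from linearity of $\mu\mapsto\mu(\mathfrak{B}_T)$ and of the pushforward. Weak closedness comes from the Portmanteau upper bound on a set that is closed because $f$ and $h$ are continuous. The slice is handled by passing $(X_0)_{\#}\mu^k=\rho_0$ to the limit.

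The only difference is cosmetic. The paper applies Portmanteau to the per-step graph sets $\Gamma_t^x,\Gamma_t^y$ under the marginals $(X_t,U_t,X_{t+1})_{\#}\mu^k$ and $(X_t,U_t,Y_t)_{\#}\mu^k$. You apply it once to the closed set $\mathfrak{B}_T\subseteq\Omega_T$ itself, which makes the finite-intersection step (left implicit in the paper) explicit.
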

	
	\begin{proof}
		\emph{Convexity.} If $\mu^1,\mu^2\in\mathcal{M}_{\mathcal{B}}$ and
		$\lambda\in[0,1]$, then
		$\mu^\lambda:=\lambda\mu^1+(1-\lambda)\mu^2$ satisfies
		$\mu^\lambda(\mathfrak{B}_T)=1$, hence
		$\mu^\lambda\in\mathcal{M}_{\mathcal{B}}$. The same calculation shows
		that $\mathcal{M}_{\mathcal{B}}(\rho_0)$ is convex.
		
		\emph{Weak closedness.} Let $\mu^k\rightharpoonup\mu$ in $\calP(\Omega_T)$
		with $\mu^k\in\mathcal{M}_{\mathcal{B}}$ for every $k$. We must show that
		$\mu\in\mathcal{M}_{\mathcal{B}}$, i.e., that
		the dynamics and output equations hold $\mu$-almost surely.
		
		For each $t$, define the joint marginals
		\[
		\eta_t^k := (X_t,U_t,X_{t+1})_{\#}\mu^k,
		\qquad
		\sigma_t^k := (X_t,U_t,Y_t)_{\#}\mu^k,
		\]
		and similarly $\eta_t,\sigma_t$ for $\mu$. Since the coordinate
		projections are continuous,
		$\eta_t^k\rightharpoonup\eta_t$ and
		$\sigma_t^k\rightharpoonup\sigma_t$.
		
		Continuity of $f$ and $h$ ensures that the graph sets
		\begin{align*}
			\Gamma_t^x
			&:= \{(x,u,x')\in\calX\times\calU\times\calX : x'=f(x,u)\},\\
			\Gamma_t^y
			&:= \{(x,u,y)\in\calX\times\calU\times\calY : y=h(x,u)\}
		\end{align*}
		are closed subsets of their respective product spaces. Since
		$\mu^k\in\mathcal{M}_{\mathcal{B}}$, the dynamics and output equations
		hold $\mu^k$-almost surely, so
		\[
		\eta_t^k(\Gamma_t^x)=1
		\quad\text{and}\quad
		\sigma_t^k(\Gamma_t^y)=1
		\qquad \text{for every } k.
		\]
		Applying the Portmanteau theorem~\cite[Thm.~2.1]{Billingsley1999} to the
		closed set $\Gamma_t^x$
		gives $1 \leq \eta_t(\Gamma_t^x)$, which forces
		$\eta_t(\Gamma_t^x)=1$ since $\eta_t$ is a probability measure.
		The same argument gives $\sigma_t(\Gamma_t^y)=1$, so
		$\mu\in\mathcal{M}_{\mathcal{B}}$.
		
		If $\mu^k\in\mathcal{M}_{\mathcal{B}}(\rho_0)$ for every $k$, then for
		every $\varphi\in C_b(\calX)$,
		\begin{equation*}
			\int_{\calX}\varphi\,d(X_0)_{\#}\mu^k
			= \int_{\Omega_T}\varphi(X_0)\,d\mu^k
			\longrightarrow \int_{\Omega_T}\varphi(X_0)\,d\mu
			= \int_{\calX}\varphi\,d(X_0)_{\#}\mu.
		\end{equation*}
		Since $(X_0)_{\#}\mu^k = \rho_0$ for every $k$, the left-hand side is
		constant, so $(X_0)_{\#}\mu=\rho_0$ and
		$\mu\in\mathcal{M}_{\mathcal{B}}(\rho_0)$.
	\end{proof}
	
	\begin{proposition}[Extreme points]
		\label{prop:extreme_points}
		The extreme points of the behavioral-measure set
		$\mathcal{M}_{\mathcal{B}}$ are precisely the Dirac masses
		$\delta_{\omega}$ concentrated on individual admissible trajectories
		$\omega\in\mathfrak{B}_T$.
	\end{proposition}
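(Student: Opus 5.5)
The proof has two directions: Dirac masses on admissible paths are extreme, and every extreme point is such a Dirac mass. Throughout we use that $\mathfrak{B}_T$ is a Borel set; indeed it is closed, since it equals $\bigcap_t g_t^{-1}(\{0\})$ with each $g_t$ continuous, as in the proof of Proposition~\ref{prop:operator_identities}.

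\emph{Diracs are extreme.} Fix $\omega\in\mathfrak{B}_T$. Then $\delta_\omega\in\mathcal{M}_{\mathcal{B}}$ because $\delta_\omega(\mathfrak{B}_T)=1$. Suppose $\delta_\omega=\lambda\mu^1+(1-\lambda)\mu^2$ with $\lambda\in(0,1)$ and $\mu^i\in\mathcal{M}_{\mathcal{B}}$. Evaluating at the Borel set $\Omega_T\setminus\{\omega\}$ (singletons are closed in a Polish space) gives $0=\lambda\mu^1(\Omega_T\setminus\{\omega\})+(1-\lambda)\mu^2(\Omega_T\setminus\{\omega\})$. Both terms are nonnegative with positive weights, so each vanishes. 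Hence $\mu^1=\mu^2=\delta_\omega$, and $\delta_\omega$ is extreme.

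\emph{Extreme points are Diracs.} Let $\mu\in\mathcal{M}_{\mathcal{B}}$ be extreme. The first step is to show that $\mu$ is $\{0,1\}$-valued. Suppose instead that some Borel $A$ has $\mu(A)=\alpha\in(0,1)$. Set $\mu^1:=\mu(\cdot\cap A)/\alpha$ and $\mu^2:=\mu(\cdot\cap A^c)/(1-\alpha)$. Both are probability measures with $\mu^i(\mathfrak{B}_T)=1$, because $\mu(\mathfrak{B}_T^c)=0$. They are distinct, since they are mutually singular, and $\mu=\alpha\mu^1+(1-\alpha)\mu^2$. This contradicts extremality, so $\mu$ takes only the values $0$ and $1$.

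The second step, which is the main obstacle, is to show that a $\{0,1\}$-valued Borel probability measure on the Polish space $\Omega_T$ is a Dirac mass. The plan is to use separability. Fix a countable base $\{B_n\}$ of open sets of diameter at most $1/k$ for each $k$. For each $k$, countably many balls of radius $1/k$ cover $\Omega_T$, so by countable subadditivity at least one of them, call it $C_k$, has measure $1$. The set $K:=\bigcap_k \overline{C_k}$ is then a countable intersection of full-measure sets, so $\mu(K)=1$; in particular $K\neq\emptyset$. Its diameter is $0$, since $K\subseteq\overline{C_k}$ and $\overline{C_k}$ has diameter at most $2/k$ for every $k$. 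Therefore $K=\{\omega^\ast\}$ for a single point $\omega^\ast$, and $\mu=\delta_{\omega^\ast}$.

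Finally, $\mu(\mathfrak{B}_T)=1$ means $\delta_{\omega^\ast}(\mathfrak{B}_T)=1$, so $\omega^\ast\in\mathfrak{B}_T$. This completes the characterization: the extreme points of $\mathcal{M}_{\mathcal{B}}$ are exactly the Dirac masses $\delta_\omega$ with $\omega\in\mathfrak{B}_T$. The argument uses neither the dynamics nor the slice constraint, which is consistent with the remark that the characterization fails on $\mathcal{M}_{\mathcal{B}}(\rho_0)$: there, the restriction trick in the first step of the second direction would break the constraint $(X_0)_{\#}\mu=\rho_0$.
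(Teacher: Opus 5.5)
Your proof is correct and follows the paper's argument. Dirac masses are shown extreme by evaluating the convex combination on the complement of $\{\omega\}$, and a non-Dirac behavioral measure is split by conditioning on a Borel set $A$ with $0<\mu(A)<1$. The one difference is that you actually prove, via separability and a sequence of full-measure balls of shrinking radius, that a $\{0,1\}$-valued Borel probability measure on $\Omega_T$ is a Dirac mass. The paper simply asserts this step from Polishness, and your argument shows that separability alone suffices.
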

	
	Since $\mathfrak{B}_T$ is a closed subset of the Polish trajectory space
	$\Omega_T$, finitely supported probability measures are weakly dense in
	$\calP(\mathfrak{B}_T)$~\cite{Billingsley1999}. Thus every non-Dirac
	behavioral measure can be approximated by finite mixtures of Dirac masses.
	
	\begin{proof}
		\emph{Dirac masses are extreme.} Let $\omega\in\mathfrak{B}_T$ and
		suppose $\delta_{\omega} = \lambda\mu^1 + (1-\lambda)\mu^2$ for some
		$\lambda\in(0,1)$ and $\mu^1,\mu^2\in\mathcal{M}_{\mathcal{B}}$. For
		any Borel set $A\subseteq\Omega_T$ not containing $\omega$,
		$\delta_{\omega}(A) = 0$, so
		$\lambda\mu^1(A)+(1-\lambda)\mu^2(A) = 0$. Since $\lambda > 0$ and
		$1-\lambda > 0$, both $\mu^1(A)$ and $\mu^2(A)$ must be zero. This
		holds for every such $A$, so both $\mu^1$ and $\mu^2$ are concentrated
		on the singleton $\{\omega\}$, giving
		$\mu^1=\mu^2=\delta_{\omega}$. Hence, $\delta_{\omega}$ is extreme.
		
			\emph{No other measure is extreme.} Let
			$\mu\in\mathcal{M}_{\mathcal{B}}$ be non-Dirac. Because $\Omega_T$ is
			Polish, there exists
			a Borel set $A\subseteq\Omega_T$ with $0<\mu(A)<1$. Define the
		conditional measures
		\[
		\mu_A(B):=\frac{\mu(B\cap A)}{\mu(A)},
		\qquad
		\mu_{A^c}(B):=\frac{\mu(B\cap A^c)}{\mu(A^c)}.
		\]
		Since $\mu$ is supported on $\mathfrak{B}_T$, both $\mu_A$ and
		$\mu_{A^c}$ belong to $\mathcal{M}_{\mathcal{B}}$, and
		\[
		\mu = \mu(A)\,\mu_A + \mu(A^c)\,\mu_{A^c},
		\]
		which is nontrivial because $\mu_A(A)=1$ while $\mu_{A^c}(A)=0$.
	\end{proof}

	\section{Optimal Control Over Behavioral Measures}
	\label{sec:optimal_control}
	
	This section shows how the behavioral-measure set serves as the foundation for
	optimal control. Once the system is described by
	$\mathcal{M}_{\mathcal{B}}$, optimal control reduces to optimizing a linear
	cost functional over the initial slice
	$\mathcal{M}_{\mathcal{B}}(\rho_0)$. The resulting problem is a linear
	program over occupation measures whose dual is exactly Bellman's
		dynamic-programming recursion. Strong duality ensures that both formulations
		yield the same optimal cost, but they provide complementary information: the
		occupation-measure formulation gives direct access to the distributional
		structure of the optimal solution, including trajectory statistics and
		distributional constraints, while Bellman's recursion gives the pointwise
		value-function and optimal policy.
	
	We now make this precise. Let $\rho_0\in\calP(\calX)$ be a prescribed initial
	distribution, and let $\ell:\calX\times\calU\to\R$ and $\phi:\calX\to\R$
	denote the stage and terminal costs. The behavioral formulation of
	finite-horizon optimal control seeks the trajectory distribution in the
	initial slice $\mathcal{M}_{\mathcal{B}}(\rho_0)$ that minimizes the expected
	total cost:
	\begin{equation}
		\label{eq:path_primal}
		\inf_{\mu\in\mathcal{M}_{\mathcal{B}}(\rho_0)}
		\int_{\Omega_T}\Bigl(\sum_{t=0}^{T-1}\ell(X_t,U_t)+\phi(X_T)\Bigr)\,d\mu.
	\end{equation}
		The optimization is over probability measures on entire trajectories. However,
		the two-way correspondence established in
		Proposition~\ref{prop:occupation_representation} allows us to reformulate this
		path-level problem equivalently as an optimization over the per-step state and
		state-input distributions $\rho_t$ and $\lambda_t$. This reduction is valid
		here because the cost is additive and depends only on $(X_t,U_t)$ and $X_T$;
		general path-dependent costs or temporal-correlation constraints require the
		full trajectory measure. We record this
		reformulation as a definition for later reference.
	
	\begin{definition}[Occupation-measure primal problem]
		\label{def:measure_control}
		The occupation-measure reformulation of~\eqref{eq:path_primal} optimizes
		over the per-step state distributions
		$\rho_t\in\calP(\calX)$ and state-input distributions
		$\lambda_t\in\calP(\calX\times\calU)$, subject to the flow constraints
		from Proposition~\ref{prop:occupation_representation}:
		\begin{align}
			\label{eq:occupation_primal}
			p^\star(\rho_0):=
			\inf_{\{\rho_t,\lambda_t\}}
			\quad &
			\sum_{t=0}^{T-1}
			\int_{\calX\times\calU}\ell(x,u)\,d\lambda_t(x,u)
			+ \int_{\calX}\phi(x)\,d\rho_T(x)
			\notag\\
			\text{s.t.}\quad &
			\rho_0 \text{ prescribed},
			\notag\\
			& (\pi_X)_{\#}\lambda_t = \rho_t,
			\quad t=0,\dots,T-1,
			\notag\\
			& \rho_{t+1} = f_{\#}\lambda_t,
			\quad t=0,\dots,T-1.
		\end{align}
	\end{definition}
	
	\subsection{Strong Duality, Bellman Recursion, and Policy Extraction}
	
	We now derive the dual of~\eqref{eq:occupation_primal} and show that it
	coincides with Bellman's dynamic-programming recursion.
	
	\begin{theorem}[Strong duality and Bellman equivalence]
		\label{thm:strong_duality_bellman}
		Assume that $\calX$ and $\calU$ are compact, that $f$ is continuous, and
		that $\ell$ and $\phi$ are continuous. Then, the dual of the
		occupation-measure problem~\eqref{eq:occupation_primal} is a maximization
		over value functions $V_t\in C(\calX)$, given by
		\begin{equation}
			\label{eq:dual_ocp}
			\begin{aligned}
				d^\star(\rho_0):=
				\sup_{\{V_t\}_{t=0}^T}\quad &
				\int_{\calX}V_0(x)\,d\rho_0(x)\\
				\text{s.t.}\quad &
				V_T(x)\le \phi(x),\quad x\in\calX,\\
				& V_t(x)\le \ell(x,u)+V_{t+1}(f(x,u)),\\
				& (x,u)\in\calX\times\calU,
				\; t=0,\dots,T-1.
			\end{aligned}
		\end{equation}
		Strong duality holds, i.e.,
		$p^\star(\rho_0)=d^\star(\rho_0)$.
		Moreover, the dual optimum is attained by the Bellman recursion, as follows: Setting
		$V_T^\star:=\phi$ and
		\begin{equation}
			\label{eq:bellman_recursion}
			V_t^\star(x)
			= \min_{u\in\calU}
			\bigl\{\ell(x,u)+V_{t+1}^\star(f(x,u))\bigr\},
			\quad t=T{-}1,\dots,0,
		\end{equation}
		yields a dual optimal family $\{V_t^\star\}_{t=0}^T$, and there exists a
		primal optimal pair $\{(\rho_t^\star,\lambda_t^\star)\}$ attaining the
		infimum in~\eqref{eq:occupation_primal}.
	\end{theorem}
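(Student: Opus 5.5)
The argument is a direct sandwich: weak duality from the flow constraints, a primal feasible point built from a measurable minimizer of the Bellman recursion, and matching of the two values.

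\textbf{Step 1 (Bellman functions are continuous).} Since $\calX,\calU$ are compact and $\ell,\phi,f$ are continuous, we argue by backward induction. If $V_{t+1}^\star\in C(\calX)$, then $(x,u)\mapsto \ell(x,u)+V_{t+1}^\star(f(x,u))$ is continuous on the compact set $\calX\times\calU$. Berge's maximum theorem then gives that $V_t^\star$ is continuous. It also gives that the argmin correspondence is nonempty, compact-valued and upper hemicontinuous. The Kuratowski--Ryll-Nardzewski selection theorem (or a measurable-selection theorem for closed-valued measurable correspondences) then yields a Borel selector $\kappa_t^\star:\calX\to\calU$ attaining the minimum in \eqref{eq:bellman_recursion}. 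In particular $\{V_t^\star\}$ lies in $C(\calX)$ and is dual feasible for \eqref{eq:dual_ocp}.

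\textbf{Step 2 (weak duality).} Take any primal feasible $\{\rho_t,\lambda_t\}$ and any dual feasible $\{V_t\}$. Integrate the constraint $V_t(x)\le \ell(x,u)+V_{t+1}(f(x,u))$ against $\lambda_t$. By \eqref{eq:flow_marginal} the left side becomes $\int V_t\,d\rho_t$, and by \eqref{eq:flow_dynamics} the last term becomes $\int V_{t+1}\,d\rho_{t+1}$. Summing the resulting inequalities over $t$ telescopes. Combined with $\int V_T\,d\rho_T\le\int\phi\,d\rho_T$, this gives $\int V_0\,d\rho_0\le$ the primal cost, hence $d^\star\le p^\star$. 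All integrals are finite because continuous functions on compact sets are bounded. This step is essentially the Lagrangian derivation of \eqref{eq:dual_ocp}: pairing $C(\calX)$ multipliers with the equality constraints and minimizing over nonnegative $\lambda_t,\rho_T$ yields exactly the pointwise inequality constraints. I would present it this way, so that ``the dual'' is justified rather than merely posited.

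\textbf{Step 3 (attainment and zero gap).} Define $\rho_0^\star=\rho_0$, $\lambda_t^\star:=(\mathrm{id},\kappa_t^\star)_{\#}\rho_t^\star$ and $\rho_{t+1}^\star:=f_{\#}\lambda_t^\star$. This family is feasible for \eqref{eq:occupation_primal}, and by Proposition~\ref{prop:occupation_representation} it lifts to some $\mu\in\mathcal{M}_{\mathcal{B}}(\rho_0)$. On the support of $\lambda_t^\star$ the Bellman inequality holds with equality, so the telescoping of Step~2 is exact. Hence the primal cost of $\{\rho^\star_t,\lambda^\star_t\}$ equals $\int V_0^\star\,d\rho_0$. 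With weak duality this gives
\[
p^\star\le \int V_0^\star\,d\rho_0\le d^\star\le p^\star.
\]
So strong duality holds, the primal infimum is attained, and $\{V_t^\star\}$ is dual optimal.

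\textbf{Main obstacle.} The delicate point is the measurable selection in Step~1; everything else is bookkeeping. I would first try Berge's theorem together with the measurable maximum theorem (Aliprantis--Border, Thm.~18.19), which applies because the objective is Carathéodory and $\calU$ is compact metric. As a fallback I would avoid selection altogether. By compactness of $\calP(\calX\times\calU)$ under weak convergence (Prokhorov), the primal admits a minimizer, since the cost is weakly continuous and the constraints are weakly closed. Alternatively, one can construct $\lambda_t^\star$ supported on the closed set where the Bellman inequality is tight by choosing any probability measure with first marginal $\rho_t^\star$ concentrated on that argmin graph. Such a measure exists via a measurable selection on a compact graph, or via a limiting argument using the closedness from Theorem~\ref{thm:convexity_closedness}.
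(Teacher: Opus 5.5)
Your proposal is correct and follows the paper's proof essentially step for step. Both derive the dual as a Lagrangian, get weak duality by integrating the Bellman inequality against $\lambda_t$ and telescoping through the flow constraints, and get strong duality from Berge's theorem plus a measurable selector (your $\kappa_t^\star$, the paper's $\alpha_t^\star$); that selector builds $\lambda_t^\star=(\mathrm{id},\kappa_t^\star)_{\#}\rho_t^\star$, whose cost equals $\int V_0^\star\,d\rho_0$, giving the sandwich $p^\star\le\int V_0^\star\,d\rho_0\le d^\star\le p^\star$.

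Your selection-free fallback is also valid, and the paper does not need it. It concentrates $\lambda_t^\star$ on the closed tight set $\{\ell+V_{t+1}^\star\circ f=V_t^\star\}$ using finitely supported approximations of $\rho_t^\star$ and a Prokhorov limit.
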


	\begin{remark}[Prior art, interpretation, and complementary roles]
		\label{rem:primal_dual_roles}
		The duality between occupation-measure linear programs and Bellman's
		recursion is classical in the MDP literature; see
		\cite{HernandezLermaLasserre1996,Altman1999} and
		\cite[Ch.~6]{Puterman1994}. The contribution of
		Theorem~\ref{thm:strong_duality_bellman} is to embed this duality in the
		behavioral-measure framework, where occupation measures expose
		distributional constraints while Bellman recursion yields value functions
		and measurable optimal policies.
	\end{remark}

		\begin{proof}
			\emph{Dual derivation.} Treat~\eqref{eq:occupation_primal} as a linear
			program over nonnegative finite measures, with the initial measure
			$\rho_0$ fixed and the flow constraints enforcing the masses.
			Equivalently, one may regard~\eqref{eq:occupation_primal} as a conic linear program over nonnegative finite measures; the fixed initial law
			and the flow equations enforce the unit masses of all feasible marginals.
			Introduce
			continuous multipliers $V_t\in C(\calX)$ for the marginal and dynamics
			constraints. With the sign convention used below, the Lagrangian is
			\begin{equation*}
			\mathcal L
			=
			\sum_{t=0}^{T-1}
			\int_{\calX\times\calU}
			\bigl[
			\ell(x,u)+V_{t+1}(f(x,u))-V_t(x)
			\bigr]\,d\lambda_t\\
			+
			\int_{\calX}
			\bigl[
			\phi(x)-V_T(x)
			\bigr]\,d\rho_T
			+
			\int_{\calX}V_0(x)\,d\rho_0(x).
			\end{equation*}
			Minimizing this expression over nonnegative measures yields a finite
			lower bound exactly when
			\[
			V_T(x)\le \phi(x),\qquad x\in\calX,
			\]
			and
			\[
			\begin{aligned}
			V_t(x)&\le \ell(x,u)+V_{t+1}(f(x,u)),\quad (x,u)\in\calX\times\calU,\quad t=0,\ldots,T-1.
			\end{aligned}
			\]
			Under these inequalities, the infimum of the Lagrangian over
			$\lambda_t$ and $\rho_T$ is
			$\int_{\calX}V_0\,d\rho_0$, which gives the dual problem
			\eqref{eq:dual_ocp}.
			
			\emph{Weak duality.} Let $\{(\rho_t,\lambda_t)\}$ be primal feasible
			and $\{V_t\}$ dual feasible. Integrating the dual constraint
		$V_t(x)\le\ell(x,u)+V_{t+1}(f(x,u))$ against $\lambda_t$ gives
		\begin{equation*}
			\int V_t(x)\,d\lambda_t(x,u)
			\le \int \ell(x,u)\,d\lambda_t(x,u)\\
			+ \int V_{t+1}(f(x,u))\,d\lambda_t(x,u).
		\end{equation*}
		By the flow constraints~\eqref{eq:flow_marginal}
		and~\eqref{eq:flow_dynamics}, the left-hand side equals
		$\int V_t\,d\rho_t$ and the last term equals
		$\int V_{t+1}\,d\rho_{t+1}$. Summing over $t=0,\dots,T-1$, the
		$\int V_t\,d\rho_t$ terms telescope, leaving
		\begin{equation*}
			\int V_0\,d\rho_0
			\le \sum_{t=0}^{T-1}\int \ell\,d\lambda_t
			+ \int V_T\,d\rho_T\\
			\le \sum_{t=0}^{T-1}\int \ell\,d\lambda_t
			+ \int \phi\,d\rho_T,
		\end{equation*}
		where the second inequality uses $V_T\le\phi$.
		Taking the infimum over primal pairs and the supremum over dual families
		gives
		$d^\star(\rho_0)\le p^\star(\rho_0)$.
		
		\emph{Strong duality.} Set $V_T^\star:=\phi$ and compute
		$V_{T-1}^\star,\dots,V_0^\star$ backward by
		\eqref{eq:bellman_recursion}. By continuity of $f$ and $\ell$,
		compactness of $\calU$, and Berge's theorem~\cite{Berge1963}, the
		minimum is attained and each $V_t^\star$ is continuous. A measurable
		selection~\cite[Ch.~7]{BertsekasShreve1996} yields
		$\alpha_t^\star(x)\in\argmin_u\{\ell(x,u)+V_{t+1}^\star(f(x,u))\}$.
		Define $\rho_0^\star:=\rho_0$,
		$\lambda_t^\star:=(\mathrm{id},\alpha_t^\star)_{\#}\rho_t^\star$, and
		$\rho_{t+1}^\star:=(f(\cdot,\alpha_t^\star(\cdot)))_{\#}\rho_t^\star$.
		Then
		\[
		V_t^\star(x)
		=\ell(x,\alpha_t^\star(x))
		+V_{t+1}^\star(f(x,\alpha_t^\star(x))).
		\]
		Integrating against $\rho_t^\star$ and telescoping gives
		\[
		\int V_0^\star\,d\rho_0
		= \sum_{t=0}^{T-1}\int\ell\,d\lambda_t^\star
		+ \int\phi\,d\rho_T^\star.
		\]
		The left-hand side is a dual objective value and the right-hand side is
		a primal cost, so $d^\star(\rho_0) \geq p^\star(\rho_0)$. Together
		with weak duality, this yields $p^\star(\rho_0)=d^\star(\rho_0)$.
	\end{proof}
	
	\begin{corollary}[Policy extraction from an optimal measure]
		\label{cor:policy_extraction}
		Under the assumptions of Theorem~\ref{thm:strong_duality_bellman}, let
		$\{V_t^\star\}$ be the optimal Bellman value functions and let
		$\{(\rho_t^\star,\lambda_t^\star)\}$ be any optimal solution of
		\eqref{eq:occupation_primal}. For each $t$, define the set of optimal
		actions at state $x$ as
		\[
		M_t(x)
		= \argmin_{u\in\calU}
		\bigl\{\ell(x,u)+V_{t+1}^\star(f(x,u))\bigr\}.
		\]
		Then the following hold.
		
		\textup{(a)}~\emph{Complementary slackness.} Disintegrate the optimal
		state-input measure as
		$\lambda_t^\star(dx,du)=\rho_t^\star(dx)\,\kappa_t^\star(du\mid x)$,
		where $\kappa_t^\star(\cdot\mid x)$ is the conditional distribution of
		the input given the state. Then, $\kappa_t^\star(\cdot\mid x)$ is
		supported on $M_t(x)$ for $\rho_t^\star$-almost every $x$. In other
		words, any optimal occupation measure concentrates its control actions
		on the Bellman-optimal set.
		
		\textup{(b)}~\emph{Deterministic optimal policy.} There exists a
		measurable selector
		$\alpha_t^\star:\calX\to\calU$ with
		$\alpha_t^\star(x)\in M_t(x)$ for every $x\in\calX$. The
		deterministic policy $\alpha_t^\star$ generates feasible and optimal
		state and occupation sequences for~\eqref{eq:occupation_primal} via
		\[
		\begin{aligned}
			\bar\rho_0&:=\rho_0,\qquad
			\bar\rho_{t+1}
			:=\bigl(f(\cdot,\alpha_t^\star(\cdot))\bigr)_{\#}\bar\rho_t,\qquad
			\bar\lambda_t&:=(\mathrm{id},\alpha_t^\star)_{\#}\bar\rho_t.
		\end{aligned}
		\]
	\end{corollary}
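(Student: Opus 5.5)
For part (a) I would use complementary slackness extracted directly from the telescoping identity in the proof of Theorem~\ref{thm:strong_duality_bellman}. Define the Bellman residual
\[
r_t(x,u):=\ell(x,u)+V_{t+1}^\star(f(x,u))-V_t^\star(x)\ \ge 0 .
\]
It is nonnegative by \eqref{eq:bellman_recursion}, and it is continuous because $V_t^\star$ is continuous (Berge's theorem, as already shown). The set $\{r_t=0\}$ is exactly the graph $\{(x,u):u\in M_t(x)\}$.

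Take any optimal $\{(\rho_t^\star,\lambda_t^\star)\}$ and repeat the weak-duality computation with $V_t^\star$ and $V_T^\star=\phi$. The two sides are no longer separated by an inequality but by an exact identity:
\[
\sum_{t=0}^{T-1}\int\ell\,d\lambda_t^\star+\int\phi\,d\rho_T^\star-\int V_0^\star\,d\rho_0=\sum_{t=0}^{T-1}\int r_t\,d\lambda_t^\star .
\]
This uses the flow constraints \eqref{eq:flow_marginal}--\eqref{eq:flow_dynamics} to telescope. By strong duality $p^\star=d^\star$ and by optimality the left side vanishes. Each integrand is nonnegative, so $r_t=0$ holds $\lambda_t^\star$-a.e. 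Hence $\lambda_t^\star$ is concentrated on the closed graph set $G_t=\{r_t=0\}$.

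Now disintegrate $\lambda_t^\star=\rho_t^\star\otimes\kappa_t^\star$ via \cite{BillingsleyPM1995}, as in Proposition~\ref{prop:occupation_representation}. Then
\[
0=\lambda_t^\star(G_t^c)=\int\kappa_t^\star(G_t^c(x)\mid x)\,\rho_t^\star(dx),
\]
where $G_t^c(x)=\calU\setminus M_t(x)$ is the section. So $\kappa_t^\star(M_t(x)\mid x)=1$ for $\rho_t^\star$-a.e.\ $x$. Since $M_t(x)$ is closed (it is a section of a closed set), this means the kernel is also supported on $M_t(x)$ in the topological sense. The step needing the most care is measurability of $x\mapsto\kappa_t^\star(G_t^c(x)\mid x)$. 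This follows from the standard fact that $x\mapsto\kappa(B_x\mid x)$ is Borel for Borel $B\subseteq\calX\times\calU$, which is part of the disintegration theorem.

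For part (b), $M_t(x)$ is nonempty by compactness of $\calU$ and continuity, and compact, and the graph $G_t$ is closed. The multifunction $M_t$ is therefore upper semicontinuous with compact values, and the measurable selection theorem \cite[Ch.~7]{BertsekasShreve1996} (or Kuratowski--Ryll-Nardzewski) gives a Borel $\alpha_t^\star$ with $\alpha_t^\star(x)\in M_t(x)$ for every $x$.

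The sequences $\bar\rho_t,\bar\lambda_t$ are feasible by construction: $(\pi_X)_\#\bar\lambda_t=\bar\rho_t$, and $f_\#\bar\lambda_t=(f(\cdot,\alpha_t^\star))_\#\bar\rho_t=\bar\rho_{t+1}$. Optimality then follows exactly as in the strong-duality part of Theorem~\ref{thm:strong_duality_bellman}. Because $r_t(x,\alpha_t^\star(x))=0$, the residual identity above gives a cost equal to $\int V_0^\star d\rho_0=d^\star=p^\star$.

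The main obstacle, though a mild one, is the measure-theoretic bookkeeping in (a): turning "$\lambda_t^\star$-a.e.\ on $G_t$" into a statement about kernel sections $\rho_t^\star$-a.e. I would handle it through the Borel measurability of section measures as described above.
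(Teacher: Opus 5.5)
Your proposal is correct and follows essentially the same route as the paper. You use the telescoping identity with the nonnegative Bellman slack, together with strong duality, to force the slack to vanish $\lambda_t^\star$-a.e. and then transfer this to the kernels $\kappa_t^\star(\cdot\mid x)$ by disintegration; for part (b) you use a measurable selector plus the same telescoping argument. Your phrasing through the closed set $G_t$ and its sections is only a slightly more explicit version of the paper's step, which integrates the slack against $\kappa_t^\star(\cdot\mid x)$.
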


	Part~(a) says that any optimal occupation measure can only place mass on
	control actions that achieve the Bellman minimum. Part~(b) strengthens this by
	showing that a deterministic feedback policy always exists among the optimal
	solutions. In particular, randomized control strategies, while permitted by
	the occupation-measure formulation, provide no cost improvement over
	deterministic ones in this setting. The deterministic selector produces an
	optimal feasible occupation sequence, but it need not reproduce the same
	temporal coupling as an arbitrary optimal behavioral measure.

	\begin{proof}
		\emph{Part~(a).} Define the Bellman slack at time $t$ as
		\[
		g_t(x,u):=\ell(x,u)+V_{t+1}^\star(f(x,u))-V_t^\star(x).
		\]
		By definition of the Bellman recursion~\eqref{eq:bellman_recursion},
		$g_t(x,u)\ge 0$ for all $(x,u)$, with equality precisely when
		$u\in M_t(x)$. Repeating the telescoping argument from the proof of
		Theorem~\ref{thm:strong_duality_bellman}, now with the optimal pair
		$\{(\rho_t^\star,\lambda_t^\star)\}$, gives
		\[
		0
		= \sum_{t=0}^{T-1}
		\int g_t\,d\lambda_t^\star
		+ \int\bigl(\phi-V_T^\star\bigr)\,d\rho_T^\star.
		\]
		Since every term is nonnegative, $\int g_t\,d\lambda_t^\star=0$ for every
		$t$. Disintegrating
		$\lambda_t^\star(dx,du)=\rho_t^\star(dx)\kappa_t^\star(du\mid x)$ and
		using $g_t\ge0$ gives
		$\int_{\calU} g_t(x,u)\,\kappa_t^\star(du\mid x)=0$ for
		$\rho_t^\star$-almost every $x$, so
		$\kappa_t^\star(\cdot\mid x)$ is supported on $M_t(x)$.
		
		\emph{Part~(b).} The set-valued map $M_t$ has measurable graph and
		nonempty compact values (nonemptiness follows from continuity of the
		integrand and compactness of $\calU$). The measurable selection
		theorem \cite[Ch.~7]{BertsekasShreve1996} provides a measurable
		function $\alpha_t^\star:\calX\to\calU$ with
		$\alpha_t^\star(x)\in M_t(x)$ for every $x\in\calX$.
		
		Feasibility is immediate from the definitions. Since
		$\alpha_t^\star(x)\in M_t(x)$,
		\[
		V_t^\star(x)
		=\ell(x,\alpha_t^\star(x))
		+V_{t+1}^\star(f(x,\alpha_t^\star(x)))
		\quad\text{for every } x\in\calX.
		\]
		Integrating against $\bar\rho_t$ and telescoping gives
		\[
		\int V_0^\star\,d\rho_0
		= \sum_{t=0}^{T-1}\int\ell\,d\bar\lambda_t
		+ \int\phi\,d\bar\rho_T.
		\]
		The left-hand side equals $d^\star(\rho_0)=p^\star(\rho_0)$ by strong
		duality, so the deterministic pair achieves the optimal cost.
	\end{proof}

		\section{Compactness, LTI Specialization, and Stochastic Extension}
		\label{sec:structural}
		\label{sec:compactness_lti_stochastic}
	
	This section treats compactness and existence, then the measure-level
	Fundamental Lemma for controllable LTI systems, and finally a stochastic
	extension based on history-conditional kernel consistency.
	
	\subsection{Compactness and Existence}
	
	In this subsection, $\calX\subseteq\R^{n_x}$, $\calU\subseteq\R^{n_u}$,
	and $\calY\subseteq\R^{n_y}$ are closed. The next lemma gives two standard
	compactness regimes: compact signal spaces, or a uniform second-moment
	bound.
	
		\begin{lemma}[Tightness and compactness]
			\label{lem:tightness}
			Suppose that one of the following conditions holds.
			\begin{enumerate}[label=(\roman*),leftmargin=*]
				\item The spaces $\calX$, $\calU$, and $\calY$ are compact.
					\item There exists a constant $C>0$ and we restrict attention to the
					moment-bounded family
					$\mathcal{M}_{\mathcal{B}}^{(2)}(C):=
					\{\mu\in\mathcal{M}_{\mathcal{B}}:\Psi(\mu)\le C\}$, where
					\begin{equation}
						\label{eq:moment_bound}
						\begin{aligned}
							\Psi(\mu)
							&:=
							\sum_{t=0}^{T-1}
							\int_{\Omega_T}
							\bigl(\norm{X_t}^2+\norm{U_t}^2+\norm{Y_t}^2\bigr)\,d\mu 
							+ \int_{\Omega_T}\norm{X_T}^2\,d\mu.
						\end{aligned}
					\end{equation}
			\end{enumerate}
			Then, under~\textup{(i)}, $\mathcal{M}_{\mathcal{B}}$ is compact in
			$\calP(\Omega_T)$. Under~\textup{(ii)},
			$\mathcal{M}_{\mathcal{B}}^{(2)}(C)$ is tight and compact in
			$\calP(\Omega_T)$, and any family contained in
			$\mathcal{M}_{\mathcal{B}}^{(2)}(C)$ is relatively compact.
		\end{lemma}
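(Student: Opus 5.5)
The plan is to combine Prokhorov's theorem with the weak closedness already established in Theorem~\ref{thm:convexity_closedness}. Since $\Omega_T$ is Polish, Prokhorov gives the equivalence between tightness and relative compactness in $\calP(\Omega_T)$. Compactness of a tight family then follows once we also know it is weakly closed.

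\emph{Case (i).} If $\calX,\calU,\calY$ are compact, then $\Omega_T$ is a finite product of compact metric spaces and is therefore compact. Hence $\calP(\Omega_T)$ is weakly compact; one can take $K=\Omega_T$ in the tightness definition, or invoke Prokhorov directly. By Theorem~\ref{thm:convexity_closedness}, $\mathcal{M}_{\mathcal{B}}$ is a weakly closed subset of this compact space, so it is compact. We should note that $\calP(\Omega_T)$ with the weak topology is metrizable (for instance by the L\'evy--Prokhorov metric), so sequential and topological closedness agree; this matters because the closedness in Theorem~\ref{thm:convexity_closedness} was proved sequentially.

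\emph{Case (ii), tightness.} Identify $\Omega_T$ with a closed subset of $\R^N$, where $N=(T+1)n_x+T(n_u+n_y)$, and write $\norm{\omega}^2$ for the sum of the squared coordinate norms. This gives $\int\norm{\omega}^2\,d\mu=\Psi(\mu)\le C$. For $R>0$, the set $K_R:=\{\omega\in\Omega_T:\norm{\omega}\le R\}$ is closed and bounded in $\R^N$, hence compact. Markov's inequality gives
\[
\mu(\Omega_T\setminus K_R)\le \frac{C}{R^2}
\]
uniformly over $\mathcal{M}_{\mathcal{B}}^{(2)}(C)$. This is tightness, and Prokhorov then yields relative compactness of the family and of any subfamily.

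\emph{Case (ii), closedness.} This is the step I expect to be the main obstacle, because $\Psi$ is the integral of an unbounded function, so it is not weakly continuous. Take $\mu^k\rightharpoonup\mu$ with $\mu^k\in\mathcal{M}_{\mathcal{B}}^{(2)}(C)$. Theorem~\ref{thm:convexity_closedness} already gives $\mu\in\mathcal{M}_{\mathcal{B}}$, so what remains is the moment bound $\Psi(\mu)\le C$. I would show that $\Psi$ is weakly lower semicontinuous. The integrand $g(\omega)=\norm{\omega}^2$ is continuous and nonnegative, and the truncations $g\wedge M$ are bounded and continuous. For each $M$,
\[
\int (g\wedge M)\,d\mu=\lim_k\int (g\wedge M)\,d\mu^k\le \liminf_k\Psi(\mu^k)\le C.
\]
Letting $M\to\infty$ and applying monotone convergence gives $\Psi(\mu)\le C$, so the family is weakly closed. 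Combined with tightness and Prokhorov, this shows $\mathcal{M}_{\mathcal{B}}^{(2)}(C)$ is compact. Relative compactness of any subfamily then follows immediately, since a subset of a compact set is relatively compact.
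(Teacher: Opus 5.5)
Your proposal is correct and follows the paper's proof essentially step for step. In case (i) both arguments combine compactness of $\calP(\Omega_T)$ with the weak closedness from Theorem~\ref{thm:convexity_closedness}; in case (ii) both use Markov-inequality tightness on the sublevel sets $K_R$, then Prokhorov, then weak lower semicontinuity of $\Psi$. Your truncation-plus-monotone-convergence argument and your metrizability remark make explicit two points the paper only asserts.
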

	
	\begin{proof}
		Case~\textup{(i)} is immediate: if $\calX$, $\calU$, and $\calY$ are
		compact, then so is $\Omega_T$, hence $\calP(\Omega_T)$ is compact and
		$\mathcal{M}_{\mathcal{B}}$ is compact by
		Theorem~\ref{thm:convexity_closedness}.
		
		For case~\textup{(ii)}, Markov's inequality gives, for every $R>0$,
		\[
		\mu\!\left(
		\sum_{t=0}^{T-1}\bigl(\norm{X_t}^2+\norm{U_t}^2+\norm{Y_t}^2\bigr)
		+ \norm{X_T}^2 > R^2
		\right)
		\le \frac{C}{R^2}.
		\]
		Choosing $R\ge \sqrt{C/\varepsilon}$ yields the compact sublevel set
		\[
		K_{R}:=\Big\{\omega\in\Omega_{T}:\begin{aligned}[t]\sum_{t=0}^{T-1}\bigl( \norm{X_{t}(\omega)}^{2}+\norm{U_{t}(\omega)}^{2}
		 \left.+\norm{Y_{t}(\omega)}^{2}\bigr)+\norm{X_{T}(\omega)}^{2}\le R^{2}\right\} 
		\end{aligned}
		\]
		Because $\calX$, $\calU$, and $\calY$ are closed subsets of
		finite-dimensional Euclidean spaces, $K_R$ is closed and bounded in
		$\Omega_T$, hence compact.
		Moreover, $\mu(K_R)\ge 1-\varepsilon$ for every
		$\mu\in\mathcal{M}_{\mathcal{B}}^{(2)}(C)$, proving tightness; relative
		compactness follows from Prokhorov's theorem
		\cite[Thm.~5.1]{Billingsley1999}. Since the integrand in
		\eqref{eq:moment_bound} is lower semicontinuous and bounded below, the
		functional $\Psi$ is weakly lower semicontinuous. Combined with weak
		closedness of $\mathcal{M}_{\mathcal{B}}$ from
		Theorem~\ref{thm:convexity_closedness}, this implies that
		$\mathcal{M}_{\mathcal{B}}^{(2)}(C)$ is weakly closed and therefore
		compact.
	\end{proof}
		
		\begin{corollary}[Tightness of the occupation feasible set]
			\label{cor:tightness_occupation_feasible}
			Assume either that $\calX$, $\calU$, and $\calY$ are compact, or that
			the feasible behavioral measures in
			$\mathcal{M}_{\mathcal{B}}(\rho_0)$ are contained in
			$\mathcal{M}_{\mathcal{B}}^{(2)}(C)$ for some $C>0$. Then the induced
			feasible set of occupation tuples in~\eqref{eq:occupation_primal} is
			tight in
			\[
			\prod_{t=0}^{T-1}\calP(\calX\times\calU)\times
			\prod_{t=1}^{T}\calP(\calX).
			\]
			Consequently, if this feasible set is nonempty, admits at least one
			feasible tuple of finite objective value, and $\ell$ and $\phi$ are
			lower semicontinuous and bounded below, then
			\eqref{eq:occupation_primal} admits an optimal solution.
		\end{corollary}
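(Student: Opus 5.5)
The plan is to push tightness forward from behavioral measures to occupation tuples, and then obtain existence by the direct method, using Prokhorov's theorem together with lower semicontinuity of the cost.

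\emph{Step 1 (tightness).} Every feasible occupation tuple in~\eqref{eq:occupation_primal} arises from some $\mu\in\mathcal{M}_{\mathcal{B}}(\rho_0)$ via the reconstruction in Proposition~\ref{prop:occupation_representation}: take $\lambda_t=(X_t,U_t)_{\#}\mu$ and $\rho_t=(X_t)_{\#}\mu$. Under the hypothesis, Lemma~\ref{lem:tightness} shows that the relevant family of behavioral measures is tight.

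Under (i), tightness is trivial, since $\calX\times\calU$ and $\calX$ are compact.

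Under (ii), fix $\varepsilon>0$ and the compact set $K_R$ from the proof of Lemma~\ref{lem:tightness}. The projections $(X_t,U_t)(K_R)$ and $X_t(K_R)$ are compact, being continuous images. They satisfy $\lambda_t\bigl((X_t,U_t)(K_R)\bigr)\ge\mu(K_R)\ge 1-\varepsilon$, and the same bound holds for $\rho_t$. Taking products of these compact sets gives tightness in the product space, because a finite product of tight families is tight.

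\emph{Step 2 (closedness of the feasible set).} Take a minimizing sequence of feasible tuples. By Prokhorov's theorem, applied componentwise, pass to a subsequence converging weakly to some limit tuple. The constraints pass to the limit:
\begin{itemize}
\item The marginal constraint $(\pi_X)_{\#}\lambda_t=\rho_t$ survives because $\pi_X$ is continuous, so $\int\varphi\circ\pi_X\,d\lambda_t^k\to\int\varphi\circ\pi_X\,d\lambda_t$ for every $\varphi\in C_b(\calX)$.
\item The dynamics constraint $\rho_{t+1}=f_{\#}\lambda_t$ survives similarly, using continuity of $f$: $\varphi\circ f\in C_b$.
\item $\rho_0$ is fixed throughout.
\end{itemize}
Hence the limit is feasible.

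\emph{Step 3 (lower semicontinuity of the cost).} Since $\ell$ and $\phi$ are lower semicontinuous and bounded below, the Portmanteau theorem (equivalently, approximation from below by bounded continuous functions) gives
\[
\liminf_k\int\ell\,d\lambda_t^k\ge\int\ell\,d\lambda_t,
\]
and the same inequality for $\phi$ and $\rho_T$. The existence of a feasible tuple with finite objective value makes the infimum finite, since the cost is bounded below. Therefore the limit attains $p^\star(\rho_0)$.

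\emph{Main obstacle.} The delicate point is in case (ii): the minimizing sequence must stay inside the tight family. The hypothesis places all feasible behavioral measures in $\mathcal{M}_{\mathcal{B}}^{(2)}(C)$. What remains is to check that every feasible occupation tuple really comes from such a $\mu$, which the converse half of Proposition~\ref{prop:occupation_representation} supplies via the Markov lift~\eqref{eq:markov_policy_lift}.

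A secondary subtlety is the lower semicontinuity argument for unbounded $\ell$. It requires the truncation $\ell\wedge n$ together with monotone convergence, which is routine.
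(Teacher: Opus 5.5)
Your proposal is correct and follows the paper's route. Lemma~\ref{lem:tightness} gives tightness of the behavioral measures, and continuity of the coordinate projections transfers it to the occupation marginals, with Proposition~\ref{prop:occupation_representation} identifying feasible tuples as marginals of behavioral measures. Existence then follows from the Prokhorov, constraint-closedness and lower-semicontinuity argument; you write this out inline, whereas the paper simply cites Theorem~\ref{thm:existence_optimal}, whose proof is the same argument.
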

		
		\begin{proof}
			Lemma~\ref{lem:tightness} gives tightness of the relevant family of
			behavioral measures. Since the coordinate projections
			$(X_t,U_t):\Omega_T\to\calX\times\calU$ and $X_t:\Omega_T\to\calX$ are
			continuous, the induced marginals $\lambda_t$ and $\rho_t$ are tight
			for each $t$. The product family is therefore tight, and
			Proposition~\ref{prop:occupation_representation} identifies it with the
			feasible set of \eqref{eq:occupation_primal}. The claim follows from
			Theorem~\ref{thm:existence_optimal}.
		\end{proof}
		
		\begin{theorem}[Existence of an optimal occupation solution]
			\label{thm:existence_optimal}
			Assume that $f$ is continuous, that the stage cost $\ell$ and terminal
			cost $\phi$ are lower semicontinuous and bounded below, and that the
			feasible set of the occupation-measure
			problem~\eqref{eq:occupation_primal} is nonempty, tight in the product space
			$\prod_{t=0}^{T-1}\calP(\calX\times\calU)\times
		\prod_{t=1}^{T}\calP(\calX)$, and admits at least one feasible tuple of
		finite objective value. Then, \eqref{eq:occupation_primal} admits an
		optimal solution.
	\end{theorem}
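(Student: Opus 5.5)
The proof is the direct method of the calculus of variations on the product space
\[
\mathcal{S}:=\prod_{t=0}^{T-1}\calP(\calX\times\calU)\times\prod_{t=1}^{T}\calP(\calX),
\]
equipped with the product of weak topologies. Let $F\subseteq\mathcal{S}$ denote the feasible set of~\eqref{eq:occupation_primal} and $J$ its objective. The argument has three steps.

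\begin{enumerate}[label=(\arabic*),leftmargin=*]
\item \emph{Finite infimum.} Since $\ell$ and $\phi$ are bounded below and every $\lambda_t,\rho_T$ is a probability measure, $J$ is bounded below on $F$. The assumption of a feasible tuple of finite objective value then gives $p^\star(\rho_0)\in\R$.
\item \emph{Minimizing sequence and limit point.} Take a minimizing sequence $\{(\rho_t^k,\lambda_t^k)\}_k\subseteq F$ with $J\to p^\star(\rho_0)$. Each factor space is Polish, so by tightness of $F$ and Prokhorov's theorem~\cite[Thm.~5.1]{Billingsley1999}, applied componentwise over the finitely many components, we extract a subsequence along which $\lambda_t^k\rightharpoonup\lambda_t$ and $\rho_t^k\rightharpoonup\rho_t$ for every $t$. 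All limits are probability measures. We also set $\rho_0$ to the prescribed law.
\item \emph{Closedness of $F$ and lower semicontinuity of $J$.}
\begin{itemize}
\item For the marginal constraint~\eqref{eq:flow_marginal}: for $\varphi\in C_b(\calX)$, the function $\varphi\circ\pi_X$ lies in $C_b(\calX\times\calU)$. Passing to the limit in $\int\varphi\circ\pi_X\,d\lambda_t^k=\int\varphi\,d\rho_t^k$ yields $(\pi_X)_\#\lambda_t=\rho_t$. At $t=0$ the right-hand side is the constant $\rho_0$, so the limit still has initial marginal $\rho_0$.
\item For the dynamics constraint~\eqref{eq:flow_dynamics}: since $f$ is continuous, $\varphi\circ f\in C_b(\calX\times\calU)$, so $\int\varphi\,d\rho_{t+1}^k=\int\varphi\circ f\,d\lambda_t^k$ passes to the limit, giving $\rho_{t+1}=f_\#\lambda_t$.
\item Hence the limit tuple is feasible.
\item For the objective, a lower semicontinuous function $g$ that is bounded below is the increasing pointwise limit of bounded Lipschitz functions $g_n$. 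Then for each $n$, $\int g_n\,d\nu\le\liminf_k\int g\,d\nu^k$, and monotone convergence in $n$ shows that $\nu\mapsto\int g\,d\nu$ is weakly lower semicontinuous (the Portmanteau-type statement in~\cite{Billingsley1999}).
\item Applying this to each term of $J$ (a finite sum) gives $J(\text{limit})\le\liminf_k J=p^\star(\rho_0)$. Therefore the limit tuple is optimal.
\end{itemize}
\end{enumerate}

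The main obstacle is step (3), in particular the lower semicontinuity of the objective when $\ell,\phi$ are only lower semicontinuous and possibly unbounded above. I would handle it with the truncated Lipschitz approximation $g_n(z)=\inf_w\{\min(g(w),n)+n\,d(z,w)\}$, which requires $g$ to be bounded below. Passing the constraints to the limit is routine once continuity of $f$ makes $\varphi\circ f$ an admissible test function. We do not need Proposition~\ref{prop:occupation_representation} or compactness of the spaces themselves, only the assumed tightness.
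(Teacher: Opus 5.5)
Your proposal is correct and follows the paper's proof essentially step for step. Both arguments establish a finite infimum, take a minimizing sequence, extract weakly convergent subsequences via Prokhorov, pass the marginal and dynamics constraints to the limit using the test functions $\varphi\circ\pi_X$ and $\varphi\circ f$, and invoke weak lower semicontinuity of the objective. Your Lipschitz-envelope argument for that last step only fills in a detail the paper states without proof.
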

	
	\begin{proof}
		By the finite-value assumption and bounded-below costs, the infimum is
		finite. Let $\{(\rho_t^k,\lambda_t^k)\}_{k\ge 1}$ be a minimizing
		sequence with finite objective values. By
		tightness and Prokhorov's theorem~\cite[Thm.~5.1]{Billingsley1999}, a
		subsequence converges weakly to $\rho_t$ and $\lambda_t$ for every $t$.
		Passing to the limit in $(\pi_X)_{\#}\lambda_t^k=\rho_t^k$ yields
		$(\pi_X)_{\#}\lambda_t=\rho_t$, and passing to the limit in
		$\rho_{t+1}^k=f_{\#}\lambda_t^k$ yields $\rho_{t+1}=f_{\#}\lambda_t$
		because $\varphi\circ f\in C_b(\calX\times\calU)$ for every
		$\varphi\in C_b(\calX)$. Thus the limit tuple is feasible. Since $\ell$
		and $\phi$ are lower semicontinuous and bounded below, the objective is
		weakly lower semicontinuous, so the limit attains the infimum.
	\end{proof}
	
		\subsection{LTI Specialization and the Fundamental-Lemma Bridge}
		\label{subsec:lti_fundamental_lemma}
		
		Sections~\ref{sec:behavioral_measures}--\ref{sec:optimal_control} were
		formulated on state-space trajectory measures over
		$\Omega_T=\calX^{T+1}\times\calU^T\times\calY^T$. For the LTI
		specialization it is more natural, as in classical behavioral theory, to
		work on external-signal trajectories. To match the Fundamental Lemma
		literature we therefore switch from the horizon symbol $T$ to $L$. If
		$\Omega_L:=\calX^{L+1}\times\calU^L\times\calY^L$ and
		$W_t:=(U_t,Y_t)$, the external-signal projection
		\[
		\Pi_{\calW}:\Omega_L\to\calW_L,
		\qquad
		\Pi_{\calW}(\omega)=(W_0(\omega),\dots,W_{L-1}(\omega)),
		\]
		forgets the internal state. Any state-space behavioral measure induces
		an external behavioral measure through the pushforward
		$(\Pi_{\calW})_{\#}\mu$. Conversely, in the finite-horizon LTI setting,
		the restriction of $\Pi_{\calW}$ to the admissible state-space behavior
		is a surjective linear map onto the external behavior, so one may select
		a measurable right inverse and thereby lift external trajectory
		distributions to state-space behavioral measures. In this sense the
		external formulation below can be represented through state-space
		behavioral measures, although the lifting is not unique. Thus the LTI
		theory below is realization-free in the classical behavioral sense even
		though Sections~\ref{sec:behavioral_measures}--\ref{sec:optimal_control}
		are realization-based.
	
	Let $w_t:=(u_t,y_t)\in\R^{n_u+n_y}$ denote the external signal at time
	$t$ and $w_{0:L-1}:=(w_0,\dots,w_{L-1})\in\calW_L$ denote a length-$L$
	external trajectory, where $\calW_L:=(\R^{n_u+n_y})^L$ is the
	external-trajectory space. The following theorem formulates the
	behavioral-measure set for LTI systems in this setting.
	
	\begin{theorem}[Behavioral measures in the LTI case]
		\label{thm:LTI_reduction}
		Consider the controllable LTI system
		\[
		x_{t+1}=Ax_t+Bu_t,
		\quad
		y_t=Cx_t+Du_t,
		\quad t=0,\dots,L-1,
		\]
		and let $\calB_L\subseteq\calW_L$ denote its classical finite-horizon
		behavior in the sense of
		Willems~\cite{Willems1991,PoldermanWillems1998}. Then, the following
		hold:
		\begin{enumerate}[label=(\roman*),leftmargin=*]
			\item The behavior $\calB_L$ is a closed linear subspace of
			$\calW_L$.
			\item By analogy with Definition~\ref{def:behavioral_measure}, the
			external behavioral-measure set is
			$\mathcal{M}_{\mathcal{B}}^{L} := \calP(\calB_L)$, the set of all
			Borel probability measures supported on $\calB_L$.
			\item The extreme points of $\mathcal{M}_{\mathcal{B}}^{L}$ are the
			Dirac masses $\delta_{w}$ concentrated on individual admissible
			external trajectories $w\in\calB_L$. Every other element of
			$\mathcal{M}_{\mathcal{B}}^{L}$ can be approximated arbitrarily well
			by finite mixtures of such Dirac masses.
		\end{enumerate}
	\end{theorem}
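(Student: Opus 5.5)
For part (i), I will write $\calB_L$ explicitly as the image of a linear map. Stacking the state-space equations over $t=0,\dots,L-1$ gives
\[
y_{0:L-1}=\mathcal{O}_L x_0+\mathcal{T}_L u_{0:L-1},
\]
where $\mathcal{O}_L$ is the extended observability matrix and $\mathcal{T}_L$ is the block-Toeplitz matrix of Markov parameters built from $(A,B,C,D)$. Hence, after reordering coordinates into $(u,y)$ pairs,
\[
\calB_L=\bigl\{(u,y): u\in\R^{n_uL},\ x_0\in\R^{n_x},\ y=\mathcal{O}_Lx_0+\mathcal{T}_Lu\bigr\}.
\]
This is the image of a linear map from $\R^{n_x+n_uL}$ into $\calW_L$, so it is a linear subspace. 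Every linear subspace of a finite-dimensional Euclidean space is closed, which proves (i). Controllability is not needed for this step.

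Part (ii) is a definition, so the only thing to check is that it is well posed. By (i), $\calB_L$ is a closed subset of the Polish space $\calW_L$, so it is Polish. Borel probability measures on $\calB_L$ can therefore be identified with measures $\mu\in\calP(\calW_L)$ satisfying $\mu(\calB_L)=1$, exactly as in Definition~\ref{def:behavioral_measure}. I would also note that the argument of Theorem~\ref{thm:convexity_closedness} carries over directly. Convexity is immediate. Weak closedness follows from the Portmanteau theorem applied to the closed set $\calB_L$: if $\mu^k(\calB_L)=1$ and $\mu^k\rightharpoonup\mu$, then $\mu(\calB_L)\ge\limsup_k\mu^k(\calB_L)=1$.

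For part (iii), I will repeat the proof of Proposition~\ref{prop:extreme_points} with $\Omega_T$ replaced by $\calW_L$ and $\mathfrak{B}_T$ replaced by $\calB_L$. That proof used only that the ambient space is Polish and that the support condition is preserved under restriction to subsets.

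\emph{Diracs are extreme.} If $\delta_w=\lambda\mu^1+(1-\lambda)\mu^2$ with $\lambda\in(0,1)$, then every Borel set not containing $w$ is null for both $\mu^1$ and $\mu^2$. Hence $\mu^1=\mu^2=\delta_w$.

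\emph{Nothing else is extreme.} Let $\mu$ be non-Dirac. Separability of $\calW_L$ provides a Borel set $A$ with $0<\mu(A)<1$. One can take $A$ from a countable base, since a measure that gives every basic open set mass $0$ or $1$ is a Dirac mass. The conditional measures $\mu_A$ and $\mu_{A^c}$ are still supported on $\calB_L$, and $\mu=\mu(A)\mu_A+\mu(A^c)\mu_{A^c}$ is a nontrivial decomposition.

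For the approximation claim, I will use the standard fact that finitely supported measures are weakly dense in $\calP(S)$ for any separable metric space $S$, applied with $S=\calB_L$. Alternatively, one can argue concretely: the empirical measures $\frac1n\sum_{i=1}^n\delta_{w^i}$ with $w^i$ drawn i.i.d.\ from $\mu$ converge weakly to $\mu$ almost surely by Varadarajan's theorem, and each atom lies in $\calB_L$ almost surely.

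The only step that needs care is ensuring the approximating Dirac atoms lie in $\calB_L$ itself rather than merely in $\calW_L$. Working intrinsically on the closed, hence Polish, subspace $\calB_L$ settles this. Everything else is a direct transcription of the earlier results.
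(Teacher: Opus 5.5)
Your proof is correct and follows the paper's route: (i) uses linearity of the finite-horizon input--output map, (ii) is a definition, and (iii) transplants the proof of Proposition~\ref{prop:extreme_points} to $\calB_L$ and uses weak density of finitely supported measures on the closed, hence Polish, subspace $\calB_L$; in (i) you write out explicitly, via the observability and Toeplitz matrices, what the paper simply cites from Polderman--Willems. Your extra details (the countable-base argument giving a set $A$ with $0<\mu(A)<1$, the Portmanteau check of weak closedness, and the Varadarajan alternative) are all sound and only fill in steps the paper asserts without proof.
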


	\begin{proof}
		\emph{Item~(i).} Finite-horizon input-output trajectories of a linear
		system with free initial state are closed under superposition and
		described by linear equations, so they form a closed linear subspace
		of $\calW_L$; this is a classical result of behavioral
		theory~\cite{PoldermanWillems1998}.
		
		\emph{Item~(ii).} The admissible external trajectories are exactly the
		elements of $\calB_L$. On the external-signal space $\calW_L$, the
		corresponding behavioral-measure set
		consists of all Borel probability measures supported on $\calB_L$,
		giving $\mathcal{M}_{\mathcal{B}}^L=\calP(\calB_L)$.
		
		\emph{Item~(iii).} Proposition~\ref{prop:extreme_points} applies with
		$\mathfrak{B}_T$ replaced by $\calB_L$, so the extreme points of
		$\mathcal{M}_{\mathcal{B}}^L$ are exactly the Dirac masses on
		$\calB_L$. For the approximation claim, note that finite convex
		combinations of Dirac masses are precisely the finitely supported
		probability measures on $\calB_L$. Since $\calB_L$ is a closed
		subspace of the Euclidean space $\calW_L$, it is
			Polish~\cite{Billingsley1999}, and finitely supported measures are
			weakly dense in $\calP(\calB_L)$ by standard approximation results for
			probability measures on Polish spaces~\cite{Billingsley1999}. Every element of
			$\mathcal{M}_{\mathcal{B}}^L$ can, therefore, be approximated
			arbitrarily well by finite mixtures of Dirac masses.
	\end{proof}
	
	The next theorem lifts the classical
	Fundamental Lemma of Willems~\cite{Willems2005Fundamental} from individual
	trajectories to probability measures on trajectories: under persistency of
	excitation, the entire behavioral-measure set can be generated by choosing a
	probability distribution over the coefficient vector in the data Hankel
	matrix built from a noise-free exact data trajectory
	$w^d = (w_0^d, \ldots, w_{N-1}^d)$
	of length $N$ as
	\[
	H_L(w^d) := \begin{bmatrix}
		w_0^d & w_1^d & \cdots & w_{N-L}^d \\
		w_1^d & w_2^d & \cdots & w_{N-L+1}^d \\
		\vdots & \vdots & \ddots & \vdots \\
		w_{L-1}^d & w_L^d & \cdots & w_{N-1}^d
	\end{bmatrix}.
	\]
	Each column of $H_L(w^d)$ is a consecutive length-$L$ window of the
	input/output data trajectory
	$w^d = (w_0^d, \ldots, w_{N-1}^d)$, with
	$w_t^d = (u_t^d, y_t^d) \in \R^{n_u+n_y}$, shifted by one time step.
	
	\begin{theorem}[Measure-Level Fundamental Lemma]
		\label{thm:measure_level_FL}
		Let $w^d=(u^d,y^d)$ be a noise-free external trajectory of length $N$
		generated exactly by the controllable LTI system in
		Theorem~\ref{thm:LTI_reduction}. Let $n$ denote the McMillan degree of
		the system, and suppose that the input component $u^d$ is persistently
		exciting of order $L+n$. More generally, persistency of excitation of
		order $L+\bar n$ is sufficient for any known upper bound
		$\bar n\ge n$; in particular, a state dimension $n_x$ of a realization is
		a valid upper bound. Let
		$H_L(w^d)\in\R^{L(n_u+n_y)\times(N-L+1)}$ denote the Hankel matrix built
		from $w^d$. Then the behavioral-measure set on external trajectories
		satisfies
		\begin{equation}
			\label{eq:measure_level_FL}
			\mathcal{M}_{\mathcal{B}}^{L}
			= \bigl\{\pushfwd{(H_L(w^d))}\nu :
			\nu\in\calP(\R^{N-L+1})\bigr\}.
		\end{equation}
		In other words, every probability measure supported on the behavior
		$\calB_L$ can be generated by choosing a probability distribution
		$\nu$ on the coefficient space $\R^{N-L+1}$ and pushing it forward
		through the Hankel matrix, and every such pushforward produces a
		valid behavioral measure.
	\end{theorem}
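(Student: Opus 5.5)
The plan is to reduce the measure-level identity \eqref{eq:measure_level_FL} to the classical Fundamental Lemma~\cite{Willems2005Fundamental} together with a measurable right inverse of the linear map $H:=H_L(w^d)$. Under the stated persistency-of-excitation hypothesis (order $L+n$, or $L+\bar n$ for any upper bound $\bar n\ge n$, which implies order $L+n$), the classical result gives the column-space identity $\operatorname{im} H=\calB_L$. I will treat this as the sole system-theoretic input. Everything else is measure theory on the finite-dimensional spaces $\R^{N-L+1}$ and $\calW_L$, using the description $\mathcal{M}_{\mathcal{B}}^{L}=\calP(\calB_L)$ from Theorem~\ref{thm:LTI_reduction}.

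\emph{Inclusion ``$\supseteq$''.} Fix $\nu\in\calP(\R^{N-L+1})$. The map $g\mapsto Hg$ is linear and hence continuous and Borel, so $H_{\#}\nu$ is a Borel probability measure on $\calW_L$. Its mass on the behavior is
\[
H_{\#}\nu(\calB_L)=\nu\bigl(H^{-1}(\calB_L)\bigr)=\nu(\R^{N-L+1})=1,
\]
because $Hg\in\operatorname{im}H=\calB_L$ for every $g$. Note that $\calB_L$ is closed by Theorem~\ref{thm:LTI_reduction}(i), hence Borel. Therefore $H_{\#}\nu\in\mathcal{M}_{\mathcal{B}}^{L}$. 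This direction only needs the inclusion $\operatorname{im}H\subseteq\calB_L$, which holds for any exact data trajectory because each Hankel column is a length-$L$ window of a system trajectory.

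\emph{Inclusion ``$\subseteq$''.} Fix $\mu\in\calP(\calB_L)$. I need a Borel map $G:\calW_L\to\R^{N-L+1}$ with $H(G(w))=w$ for every $w\in\calB_L$. The concrete choice is $G:=H^{\dagger}$, the Moore--Penrose pseudoinverse, which is linear and hence continuous. Then $HH^{\dagger}$ is the orthogonal projector onto $\operatorname{im}H=\calB_L$, so $HH^{\dagger}w=w$ on $\calB_L$. Define $\nu:=H^{\dagger}_{\#}\mu\in\calP(\R^{N-L+1})$. For any Borel $A\subseteq\calW_L$,
\[
H_{\#}\nu(A)=\mu\bigl(\{w: HH^{\dagger}w\in A\}\bigr)=\mu(A\cap\calB_L)=\mu(A),
\]
where the middle equality uses $HH^{\dagger}=\id$ on $\calB_L$ and the last uses $\mu(\calB_L)=1$. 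Hence $\mu=H_{\#}\nu$. As a byproduct, the factorizing $\nu$ can be chosen as a continuous linear image of $\mu$, and it is supported on the row space of $H$; it is not unique when $\ker H\neq\{0\}$.

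\emph{Expected obstacle.} The main obstacle is to justify cleanly that $\operatorname{im}H=\calB_L$, rather than only the inclusion $\subseteq$, under exactly the stated hypotheses. This means checking that the paper's $\calB_L$ (free initial state, horizon $L$) matches the behavior in the classical lemma. It also means noting that persistency of excitation of order $L+\bar n$ with $\bar n\ge n$ implies order $L+n$, since persistency of excitation of order $k$ implies it for all smaller orders. I would cite~\cite{Willems2005Fundamental} for the rank condition $\operatorname{rank}H=\dim\calB_L=Ln_u+n$ under controllability, using the observability-index form of $\dim\calB_L$ if the realization is not minimal. Equality then follows from the inclusion together with the dimension count. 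Finally, restricting to Dirac measures recovers the classical lemma: taking $\mu=\delta_w$ and $\nu=\delta_{H^{\dagger}w}$ gives $w=Hg$ with $g=H^{\dagger}w$.
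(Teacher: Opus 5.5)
Your proposal is correct and follows the paper's proof essentially step for step: the classical Fundamental Lemma gives $\operatorname{im} H_L(w^d)=\calB_L$; the inclusion $\supseteq$ holds because every pushforward $(H_L)_{\#}\nu$ is concentrated on that image; and the inclusion $\subseteq$ uses the canonical lift $\nu=(H_L^\dagger)_{\#}\mu$ with $H_LH_L^\dagger w=w$ on $\calB_L$. Your side remarks are sound refinements that the paper leaves implicit, with one nuance: a rank count of the form $Ln_u+n$ assumes $L$ is at least the system lag, so it is cleaner to cite the state-input form of the classical lemma (full row rank of the stacked input/initial-state Hankel matrix), which gives $\operatorname{im}H_L(w^d)=\calB_L$ for every $L$ with no dimension count.
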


	The classical Fundamental Lemma says that every admissible trajectory
	$w \in \calB_L$ can be written as $w = H_L(w^d)g$ for some coefficient
	vector $g \in \R^{N-L+1}$. Theorem~\ref{thm:measure_level_FL} says the
	same thing one level up: every probability distribution on admissible
	trajectories can be generated by choosing a probability distribution on
	the coefficient vector $g$ and pushing it through the same Hankel matrix.
	A deterministic trajectory corresponds to a Dirac mass $\nu = \delta_g$,
	recovering the classical result as a special case.

	\begin{proof}
		The proof establishes the set equality~\eqref{eq:measure_level_FL} by
		showing inclusion in both directions.
		
		The starting point is the classical Fundamental Lemma of
		Willems~\cite{Willems2005Fundamental}. Under the noise-free data and
		persistency assumptions above, it states that
		$\calB_L = \col H_L(w^d)$. In other words, the behavior is exactly
		the column space of the Hankel matrix; exact equality requires exact
		noise-free data. This means that the Hankel
		matrix $H_L:\R^{N-L+1}\to\calW_L$, viewed as a linear map from
		coefficient space to external-trajectory space, has image equal to
		$\calB_L$.
		
		\emph{Every pushforward is a behavioral measure.} Let
		$\nu\in\calP(\R^{N-L+1})$ be any probability distribution on
		coefficient space. Since $H_L$ is a linear map between
		finite-dimensional spaces, it is continuous, so the pushforward
		$\pushfwd{(H_L)}\nu$ is a well-defined Borel probability measure on
		$\calW_L$. We need to show that this measure is supported on
		$\calB_L$. For any Borel set $A \subseteq \calW_L \setminus \calB_L$,
		\[
		\pushfwd{(H_L)}\nu(A)
		= \nu(H_L^{-1}(A))
		= 0,
		\]
		where the last equality holds because $\mathrm{Im}(H_L)=\calB_L$, so
		$H_Lg\in\calB_L$ for every $g$ and therefore
		$H_L^{-1}(A)=\emptyset$. Therefore
		$\pushfwd{(H_L)}\nu\in\calP(\calB_L)=\mathcal{M}_{\mathcal{B}}^L$.
		
		\emph{Every behavioral measure is a pushforward.} Let
		$\mu\in\mathcal{M}_{\mathcal{B}}^L=\calP(\calB_L)$. We construct a
		distribution $\nu\in\calP(\R^{N-L+1})$ on coefficient space such
		that $\pushfwd{(H_L)}\nu=\mu$. The key tool is the Moore--Penrose
		pseudoinverse $H_L^\dagger:\calW_L\to\R^{N-L+1}$, which satisfies
		$H_L H_L^\dagger = P_{\calB_L}$, the orthogonal projector onto
		$\calB_L = \mathrm{Im}(H_L)$. For any $w\in\calB_L$, the projection
		acts as the identity, so $H_L H_L^\dagger w = w$. Define the
		coefficient-space distribution
		\[
		\nu := \pushfwd{(H_L^\dagger)}\mu \in\calP(\R^{N-L+1}),
		\]
		obtained by mapping each trajectory $w$ to its minimum-norm coefficient
		vector $H_L^\dagger w$. Pushing this distribution forward through $H_L$
		recovers $\mu$:
			\[
			\begin{aligned}
			\pushfwd{(H_L)}\nu
			= \pushfwd{(H_L)}\pushfwd{(H_L^\dagger)}\mu 
			= \pushfwd{(H_L\circ H_L^\dagger)}\mu 
			= (\id_{\calB_L})_{\#}\mu 
			= \mu,
			\end{aligned}
			\]
		where the third equality uses $H_L H_L^\dagger w = w$ for every
		$w \in \calB_L$ and the fact that $\mu$ is supported on $\calB_L$.
		This completes the proof of the set
		equality~\eqref{eq:measure_level_FL}.
	\end{proof}
	
	\begin{remark}[Non-uniqueness of the lift]
		\label{rem:lift_nonuniqueness}
		The coefficient-space distribution $\nu$ producing a given behavioral
		measure $\mu$ is not unique. Whenever $N-L+1 > \dim(\calB_L)$, the
		Hankel matrix has a nontrivial kernel, so different distributions on
		coefficient space can produce the same trajectory distribution. The
		canonical choice $\nu = \pushfwd{(H_L^\dagger)}\mu$ used in the proof
		is distinguished by being the unique lift supported on the row space
		of $H_L$.
	\end{remark}
	
	\begin{remark}[Recovering the classical Fundamental Lemma]
		\label{rem:measure_FL_interpretation}
		When $\nu = \delta_g$ is a Dirac mass on a single coefficient vector,
		the pushforward $\pushfwd{(H_L)}\delta_g = \delta_{H_Lg}$ is a Dirac
		mass on the trajectory $w = H_L(w^d)g$. This recovers the classical
		Fundamental Lemma: every admissible trajectory $w \in \calB_L$ can be
		written as a linear combination of columns of the Hankel matrix. The
		measure-level result is strictly stronger, because it characterizes
		not just individual trajectories but the full set of probability
		distributions on trajectories through the same Hankel architecture.
	\end{remark}
	
	The measure-level Fundamental Lemma characterizes the full distributional
	structure of the behavior. A natural question is what remains when only
	first-order statistics are extracted. The following lemma shows that the
	set of mean trajectory vectors under all behavioral measures is exactly
	the classical behavior $\calB_L$.
	
		\begin{lemma}[Degree-one moment characterization]
			\label{lem:degree_one_moments}
		Let $\calP_1(\calB_L)$ denote the set of behavioral measures with
		finite first moment,
		\[
		\calP_1(\calB_L)
		:=
		\left\{
		\mu\in\calP(\calB_L) :
		\int_{\calB_L}\norm{w}\,d\mu(w)<\infty
		\right\},
		\]
		and for each such measure define the degree-one moment vector, which
		stacks a normalizing entry with the mean trajectory,
		\[
		m_1(\mu)
		:=
		\begin{bmatrix}
			1\\[1mm]
			\int_{\calB_L} w\,d\mu(w)
		\end{bmatrix},
		\qquad
		\mu\in\calP_1(\calB_L).
		\]
		Then, the set of all degree-one moment vectors coincides with the
		set of admissible trajectories (augmented by a leading one):
		\[
		\left\{m_1(\mu):\mu\in\calP_1(\calB_L)\right\}
		=
		\left\{
		\begin{bmatrix}
			1\\
			w
		\end{bmatrix}
		: w\in\calB_L
		\right\}.
		\]
		In particular, the affine hull of the degree-one moment set is a copy
		of the classical behavior:
		\[
		\aff\left\{m_1(\mu):\mu\in\calP_1(\calB_L)\right\}
		= \{1\}\times \calB_L.
		\]
	\end{lemma}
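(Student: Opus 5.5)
The plan is to prove the set equality by two inclusions and then deduce the affine-hull statement. Throughout I use item~(i) of Theorem~\ref{thm:LTI_reduction}: $\calB_L$ is a closed linear subspace of $\calW_L$.

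\emph{Inclusion ``$\subseteq$''.} Fix $\mu\in\calP_1(\calB_L)$. The finite first moment makes the Bochner (componentwise Lebesgue) integral $\bar w:=\int w\,d\mu(w)$ well defined in $\calW_L$. I then show $\bar w\in\calB_L$. Since $\calB_L$ is a linear subspace of a finite-dimensional space, it is the common kernel of finitely many linear functionals $a_1^\top,\dots,a_k^\top$; for instance, take a basis of $\calB_L^\perp$. For each $i$, linearity of the integral gives
\[
a_i^\top\bar w=\int a_i^\top w\,d\mu(w)=0,
\]
because $a_i^\top w=0$ for every $w\in\calB_L$ and $\mu(\calB_L)=1$. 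Hence $\bar w\in\calB_L$, and so $m_1(\mu)=(1,\bar w)$ lies in the right-hand set.

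\emph{Inclusion ``$\supseteq$''.} Given $w\in\calB_L$, the Dirac mass $\delta_w$ belongs to $\calP(\calB_L)$ and trivially has finite first moment. Its moment vector is $m_1(\delta_w)=(1,w)$. This matches the Dirac embedding in item~(iii) of Theorem~\ref{thm:LTI_reduction}. Alternatively, one can write $w=H_L(w^d)g$ and use $\nu=\delta_g$ in Theorem~\ref{thm:measure_level_FL}, but that route is not needed.

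\emph{Affine hull.} The moment set equals $\{1\}\times\calB_L$, which is already an affine subspace of $\R\times\calW_L$ because $\calB_L$ is linear. The affine hull of an affine set is the set itself, so the final identity follows.

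The only step needing care is exchanging the linear functionals with the integral in ``$\subseteq$''. This step requires integrability, which is exactly why the statement restricts to $\calP_1$. It also requires that $\calB_L$ be cut out by linear equations; in finite dimensions this is immediate from the orthogonal complement. I do not expect any further obstacle.
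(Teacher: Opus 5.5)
Your proposal is correct and follows the paper's proof essentially step for step. Like the paper, you write $\calB_L$ as the kernel of a linear map and pass it through the integral to show the mean lies in $\calB_L$, use Dirac masses $\delta_w$ for the reverse inclusion, and note that $\{1\}\times\calB_L$ is already affine, so the affine-hull identity is immediate.
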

	
	\begin{proof}
		We show both inclusions.
		
			\emph{Every mean trajectory lies in $\calB_L$.} Let
			$\mu\in\calP_1(\calB_L)$. Since $\calB_L$ is a closed linear
			subspace of the finite-dimensional space $\calW_L$, there is a matrix
			$A$ such that $\calB_L=\ker A$. The mean
			$\bar{w} := \int_{\calB_L} w\,d\mu(w)$ is well defined, and
			$Aw=0$ $\mu$-almost surely. Hence
			\[
			A\bar w
			=
			A\int w\,d\mu(w)
			=
			\int Aw\,d\mu(w)
			=0,
			\]
			so $\bar w\in\calB_L$.
			Hence, $m_1(\mu) = [1;\,\bar{w}]$ with $\bar{w}\in\calB_L$.
		
		\emph{Every trajectory is a mean.} Conversely, for any
		$w\in\calB_L$, the Dirac measure $\delta_w$ belongs to
		$\calP_1(\calB_L)$ and satisfies
		\[
		m_1(\delta_w)=
		\begin{bmatrix}
			1\\
			w
		\end{bmatrix},
		\]
		so every element of $\calB_L$ is realized as the mean of some
		behavioral measure. This establishes the set equality. The affine
		hull statement follows because $\{1\}\times\calB_L$ is already an
		affine subspace.
	\end{proof}
	
		Combining the degree-one moment characterization with the measure-level
		Fundamental Lemma yields the Hankel characterization of the moment set.

	\begin{corollary}[Degree-one moment/Hankel bridge]
		\label{cor:fundamental_bridge}
		Under the assumptions of Theorem~\ref{thm:measure_level_FL}, the
		affine hull of the degree-one moment set equals the column space of
		the Hankel matrix (augmented by a leading one):
		\begin{equation}
			\label{eq:affine_hull_hankel}
			\aff\bigl\{m_1(\mu):\mu\in\calP_1(\calB_L)\bigr\}\\
			= \biggl\{
			\begin{bmatrix}
				1\\
				H_L(w^d)g
			\end{bmatrix}
			: g\in\R^{N-L+1}
			\biggr\}
			= \{1\}\times \col H_L(w^d).
		\end{equation}
	\end{corollary}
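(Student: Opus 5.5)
The plan is to chain Lemma~\ref{lem:degree_one_moments} with the classical Fundamental Lemma identity $\calB_L=\col H_L(w^d)$, which was invoked at the start of the proof of Theorem~\ref{thm:measure_level_FL}. Lemma~\ref{lem:degree_one_moments} already gives
\[
\aff\bigl\{m_1(\mu):\mu\in\calP_1(\calB_L)\bigr\}=\{1\}\times\calB_L .
\]
Substituting $\calB_L=\col H_L(w^d)=\{H_L(w^d)g: g\in\R^{N-L+1}\}$ then yields both equalities in \eqref{eq:affine_hull_hankel} at once. The second equality is purely notational, since the set of vectors $[1;\,H_L(w^d)g]$ as $g$ ranges over $\R^{N-L+1}$ is by definition $\{1\}\times\col H_L(w^d)$.

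For a self-contained argument that uses the measure-level statement rather than only its Dirac case, I would also verify the first equality directly through Theorem~\ref{thm:measure_level_FL}.

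For the inclusion ``$\subseteq$'', take $\mu\in\calP_1(\calB_L)$. Set $\nu:=\pushfwd{(H_L^\dagger)}\mu$, as in that proof. Because $H_L^\dagger$ is linear, $\nu$ has a finite first moment, and by linearity of the integral
\[
\int w\,d\mu=\int H_Lg\,d\nu(g)=H_L\bar g,\qquad \bar g:=\int g\,d\nu .
\]
Hence $m_1(\mu)=[1;\,H_L\bar g]$. The right-hand set of \eqref{eq:affine_hull_hankel} is an affine subspace, so it contains the affine hull of all such vectors.

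For the inclusion ``$\supseteq$'', take any $g$. Theorem~\ref{thm:measure_level_FL} shows that $\pushfwd{(H_L)}\delta_g=\delta_{H_Lg}$ lies in $\calP_1(\calB_L)$, and its moment vector is $[1;\,H_Lg]$.

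The only step needing care is the interchange of $H_L$ with the integral, together with the finiteness of $\int\norm{g}\,d\nu$. Both follow from $\norm{H_L^\dagger w}\le\norm{H_L^\dagger}\,\norm{w}$ and the integrability of $\norm{w}$ under $\mu$. I do not expect a real obstacle; the result is essentially a composition of two established identities.
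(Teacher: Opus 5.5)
Your proposal is correct and essentially matches the paper's proof. Your self-contained argument is what the paper does: it takes the canonical lift $\nu=\pushfwd{(H_L^\dagger)}\mu$, bounds $\int\norm{g}\,d\nu\le\norm{H_L^\dagger}\int\norm{w}\,d\mu$ to get $\bar w=H_L\bar g$, and uses Dirac masses $\delta_g$ for the converse. Your opening shortcut (Lemma~\ref{lem:degree_one_moments} plus $\calB_L=\col H_L(w^d)$) is also valid, and your remark that the right-hand side is an affine subspace makes explicit the affine-hull step the paper leaves implicit.
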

	
	\begin{proof}
		The result follows by combining the measure-level Fundamental
			Lemma (Theorem~\ref{thm:measure_level_FL}) with the degree-one
			moment characterization (Lemma~\ref{lem:degree_one_moments}).
		
		For any $\mu\in\calP_1(\calB_L)$, Theorem~\ref{thm:measure_level_FL}
		gives $\mu = \pushfwd{(H_L)}\nu$ for the canonical lift
		$\nu = \pushfwd{(H_L^\dagger)}\mu$. The mean coefficient vector
		$\bar{g} = \int g\,d\nu(g)$ is well defined because
		\[
		\int\|g\|\,d\nu
		= \int\|H_L^\dagger w\|\,d\mu
		\le \|H_L^\dagger\|\int\|w\|\,d\mu
		< \infty.
		\]
		The mean trajectory under $\mu$ is then $\bar{w} = H_L\bar{g}$, so
		$m_1(\mu) = [1;\,H_L\bar{g}]$, which belongs to
		$\{1\}\times\col H_L(w^d)$.
		
		Conversely, for any $w\in\calB_L$, write $w = H_Lg$ and take
		$\nu = \delta_g$. Then,
		$m_1(\pushfwd{(H_L)}\delta_g) = [1;\,H_Lg] = [1;\,w]$, so every
		element of $\{1\}\times\col H_L(w^d)$ is realized.
	\end{proof}
	
	\begin{remark}[Degree-one moments versus the full measure]
		\label{rem:fundamental_interpretation}
		Corollary~\ref{cor:fundamental_bridge} extracts only first-order
		information from the measure-level Fundamental Lemma. The mean
		trajectory $\bar{w} = \int w\,d\mu$ retains the classical Hankel
		column-space structure but discards all higher-order distributional
		information such as variances and correlations across time steps.
		The full measure-level result
		(Theorem~\ref{thm:measure_level_FL}) preserves this richer structure.
	\end{remark}
	
	For controllable LTI systems, the measure-level Fundamental Lemma has an
	immediate operational consequence: any optimization over trajectory
	distributions reduces to an equivalent optimization over coefficient-space
	distributions, requiring no identified state-space model once the standard
	persistency-of-excitation condition is satisfied.
	
	\begin{corollary}[Data-driven optimization over behavioral measures]
		\label{cor:data_driven_opt}
		Under the assumptions of Theorem~\ref{thm:measure_level_FL}, for any
		bounded measurable path cost $c:\calW_L\to\R$, the optimization over
		behavioral measures is equivalent to an optimization over
		coefficient-space distributions:
		\begin{equation}
			\label{eq:data_driven_opt}
			\inf_{\mu\in\mathcal{M}_{\mathcal{B}}^L}
			\int_{\calB_L} c(w)\,d\mu(w)\\
			=
			\inf_{\nu\in\calP(\R^{N-L+1})}
			\int_{\R^{N-L+1}} c\bigl(H_L(w^d)\,g\bigr)\,d\nu(g).
		\end{equation}
	\end{corollary}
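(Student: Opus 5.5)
The plan is to show that the two infima in \eqref{eq:data_driven_opt} are taken over the same set of objective values. Theorem~\ref{thm:measure_level_FL} shows that the map $\nu\mapsto\pushfwd{(H_L)}\nu$ from $\calP(\R^{N-L+1})$ onto $\mathcal{M}_{\mathcal{B}}^L$ is surjective. So it suffices to prove that the objective is invariant under this map, in the sense that for every $\nu\in\calP(\R^{N-L+1})$,
\[
\int_{\calB_L} c(w)\,d\bigl(\pushfwd{(H_L)}\nu\bigr)(w)
=\int_{\R^{N-L+1}} c\bigl(H_L(w^d)g\bigr)\,d\nu(g).
\]

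\emph{Well-definedness.} Both integrals are well defined and finite. The path cost $c$ is bounded and Borel measurable, and $H_L$ is linear, hence continuous. Therefore $c\circ H_L$ is bounded and Borel measurable on $\R^{N-L+1}$. Also, integration of $c$ over $\calW_L$ against any $\mu\in\mathcal{M}_{\mathcal{B}}^L$ agrees with integration over $\calB_L$, because $\mu(\calB_L)=1$.

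\emph{Change of variables.} The displayed identity is the abstract change-of-variables formula for pushforward measures, $\int c\,d(\pushfwd{\varphi}\nu)=\int c\circ\varphi\,d\nu$, applied with $\varphi=H_L$. I would verify it in the standard way. For indicators $c=\mathbf 1_A$ it is the definition $\pushfwd{(H_L)}\nu(A)=\nu(H_L^{-1}(A))$. It extends to simple functions by linearity, and then to bounded measurable $c$ by uniform approximation with simple functions, or by dominated convergence.

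\emph{The two inequalities.} ($\le$) Take any $\nu\in\calP(\R^{N-L+1})$. By the first inclusion of Theorem~\ref{thm:measure_level_FL}, $\mu:=\pushfwd{(H_L)}\nu\in\mathcal{M}_{\mathcal{B}}^L$, and by the identity above its cost equals the right-hand objective at $\nu$. Hence the left infimum is at most the right infimum. ($\ge$) Take any $\mu\in\mathcal{M}_{\mathcal{B}}^L$. The second inclusion of Theorem~\ref{thm:measure_level_FL} gives the explicit lift $\nu=\pushfwd{(H_L^\dagger)}\mu$ with $\pushfwd{(H_L)}\nu=\mu$. The identity then shows that the right-hand objective at $\nu$ equals the left-hand objective at $\mu$. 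Hence the right infimum is at most the left infimum.

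\emph{Main obstacle.} I expect no real obstacle. The only points needing care are measurability of $c\circ H_L$ and finiteness of both integrals, which is exactly why $c$ is assumed bounded and measurable. As a byproduct of the two inequalities, minimizers correspond: if $\nu^\star$ attains the right infimum, then $\pushfwd{(H_L)}\nu^\star$ attains the left one, and conversely via the lift $\pushfwd{(H_L^\dagger)}$. This correspondence could be noted after the proof.
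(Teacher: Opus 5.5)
Your proposal is correct and follows essentially the same route as the paper: both use the two inclusions of Theorem~\ref{thm:measure_level_FL} (that $\nu\mapsto\pushfwd{(H_L)}\nu$ maps $\calP(\R^{N-L+1})$ onto $\mathcal{M}_{\mathcal{B}}^L$), together with the change-of-variables formula for pushforwards, to conclude that the two infima range over the same set of cost values. Your explicit two-inequality split and the measurability and boundedness checks simply make precise what the paper states more briefly.
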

	
	The equivalence extends beyond unconstrained cost minimization. Without
	distributional requirements, an expected-cost infimum over all probability
	measures may collapse to a Dirac measure on a pointwise minimizer. The
	distributional formulation becomes substantive when prescribed random
	initial laws, distributional or ensemble constraints, risk objectives,
	moment or covariance constraints, or ambiguity sets are imposed. Linear
	expectation constraints transfer through the Hankel matrix: if the
	left-hand problem includes constraints of the form
	$\int\varphi_j(w)\,d\mu(w)\le b_j$, the right-hand problem enforces
	$\int\varphi_j(H_L g)\,d\nu(g)\le b_j$, by the same change of
	variables.
	
	\begin{proof}
		By Theorem~\ref{thm:measure_level_FL}, the map
		$\nu\mapsto\pushfwd{(H_L)}\nu$ is a surjection from
		$\calP(\R^{N-L+1})$ onto $\mathcal{M}_{\mathcal{B}}^L$: every
		behavioral measure can be written as a pushforward of some
		coefficient-space distribution. For any such representation
		$\mu = \pushfwd{(H_L)}\nu$, the change-of-variables formula for
		pushforward measures gives
		\[
		\int_{\calB_L} c(w)\,d\mu(w)
		= \int_{\R^{N-L+1}} c\bigl(H_L(w^d)\,g\bigr)\,d\nu(g).
		\]
		Since every $\mu\in\mathcal{M}_{\mathcal{B}}^L$ admits such a
		representation, the two infima range over the same set of cost
		values and therefore coincide.
	\end{proof}

		\begin{remark}[Distributional DeePC as an exact lift of DeePC]
			\label{rem:deepc}
			Standard DeePC~\cite{Coulson2019,Berberich2021} solves data-driven
			optimal control by finding a single coefficient vector $g$ such that
			the trajectory $w = H_L(w^d)g$ minimizes a cost and satisfies
			constraints. This is the special case of
			Corollary~\ref{cor:data_driven_opt} in which $\nu$ is restricted to
			a Dirac mass $\delta_g$. Replacing $g$ by a probability distribution
			$\nu$ on coefficient space yields a \emph{distributional DeePC}
			formulation over trajectory distributions. For a purely unconstrained
			expected-cost problem, the optimum can still be attained by a Dirac
			measure, so the distributional lift is most useful when random initial
			laws, ensemble or distributional constraints, risk objectives,
			moment/covariance requirements, or ambiguity sets are part of the
			specification. Under the noise-free Fundamental-Lemma assumptions, this
			lift is exact at the feasible-set level: every probability distribution
			on the behavior is captured by some coefficient-space distribution using
			the same Hankel matrix and exact data trajectory $w^d$.
	\end{remark}
	
	\begin{remark}[Higher-order moment transfer]
		\label{rem:higher_moments}
			The measure-level factorization $\mu = \pushfwd{(H_L)}\nu$
			transfers moment computations from trajectory space to coefficient
			space. If $\mu$ has finite second moment, then the canonical lift
			$\nu=(H_L^\dagger)_{\#}\mu$ has finite second moment because
				\[
				\begin{aligned}
				\int \|g\|^2\,d\nu(g)
				=
				\int \|H_L^\dagger w\|^2\,d\mu(w) 
				\le
				\|H_L^\dagger\|_{\mathrm{op}}^2
				\int \|w\|^2\,d\mu(w).
				\end{aligned}
				\]
			Whenever $\nu$ has finite second moments, the mean and
			covariance of the trajectory distribution satisfy
		\[
		\E_\mu[w]=H_L\,\E_\nu[g],
		\qquad
		\mathrm{Cov}_\mu[w]=H_L\,\mathrm{Cov}_\nu[g]\,H_L^\top.
		\]
		These identities extend to higher-order moments through Kronecker
		powers of $H_L$. For variance-penalized objectives, covariance
		constraints, or conservative moment-based surrogates for chance
		constraints, they translate the relevant mean/covariance conditions to
		coefficient space.
	\end{remark}

		\subsection{Stochastic Extension}
		\label{sec:extensions}
		
		We now outline how the behavioral-measure framework extends to stochastic
		dynamics, where the next state is drawn from a transition kernel rather
		than determined by a map.
		
		Consider controlled stochastic dynamics
		$X_{t+1}\sim\mathcal{K}_t(\cdot\mid X_t,U_t)$, where each
		$\mathcal{K}_t$ is a Feller transition kernel from
		$\calX\times\calU$ to $\calX$.
		For each $t$, define the available history before applying the current
		control by
		\[
		H_t:=(X_0,U_0,Y_0,\ldots,X_{t-1},U_{t-1},Y_{t-1},X_t),
		\]
		with the convention $H_0=X_0$, and let $\mathsf H_t$ denote the
		corresponding history space. The current control $U_t$ is conditioned on
		separately, so the relevant conditioning variable is $(H_t,U_t)$.
		
		\begin{definition}[Stochastic behavioral-measure set]
			\label{def:stochastic_behavioral_measure}
			For a prescribed initial law $\rho_0\in\calP(\calX)$, let
			$\mathcal{M}_{\mathcal{B}}^{\mathrm{st}}(\rho_0)$ denote the set of all
			$\mu\in\calP(\Omega_T)$ such that $(X_0)_{\#}\mu=\rho_0$,
			$Y_t=h(X_t,U_t)$ $\mu$-almost surely for every $t=0,\dots,T-1$, and
			for every $t=0,\dots,T-1$, every
			$\psi\in C_b(\mathsf H_t\times\calU)$, and every
			$\varphi\in C_b(\calX)$,
			\begin{equation}
				\label{eq:stochastic_kernel_consistency}
				\int_{\Omega_T}\psi(H_t,U_t)\!\left[
				\varphi(X_{t+1})
				-\int_{\calX}\varphi(\xi)\,\mathcal{K}_t(d\xi\mid X_t,U_t)
				\right] d\mu = 0.
			\end{equation}
			On Polish (hence standard Borel) spaces this weak identity is
			equivalent to
			\[
			\mu\bigl(X_{t+1}\in\cdot\mid H_t,U_t\bigr)
			=
			\mathcal{K}_t(\cdot\mid X_t,U_t)
			\quad \mu\text{-a.s.}
			\]
			Indeed, products of bounded continuous test functions determine finite
			Borel measures on Polish spaces, and the usual monotone-class argument
			then identifies the regular conditional distribution.
		\end{definition}
		
		Condition~\eqref{eq:stochastic_kernel_consistency} is the stochastic
		analogue of graph support: it fixes the conditional transition kernel
		given the available history and current control, not merely the next-state
		marginal law. By contrast, the weaker identity
		\[
		\int_{\Omega_T}\varphi(X_{t+1})\,d\mu
		=
		\int_{\Omega_T}\!\int_{\calX}
		\varphi(\xi)\,\mathcal{K}_t(d\xi\mid X_t,U_t)\,d\mu
		\]
		for all $\varphi\in C_b(\calX)$ would only match the marginal law of
		$X_{t+1}$ and is therefore the stochastic analogue of the weak operator
		identities in Proposition~\ref{prop:operator_identities}; it defines only
		an outer consistency relaxation.
		Conditioning only on $(X_t,U_t)$ is also insufficient for a controlled
		Markov path law. For instance, take $T=2$, $\calX=\{0,1\}$, no control,
		and $\mathcal K_t(\{1\}\mid x)=1/2$ for every $x$. Let
		$X_0\sim\mathrm{Bernoulli}(1/2)$, let
		$X_1\sim\mathrm{Bernoulli}(1/2)$ be independent of $X_0$, and set
		$X_2=X_0$. Then $X_1\mid X_0$ and $X_2\mid X_1$ are both fair, but
		$\mu(X_2\in\cdot\mid X_0,X_1)=\delta_{X_0}$, violating the intended
		kernel.
		
		In the deterministic specialization
		$\mathcal{K}_t(\cdot\mid x,u)=\delta_{f(x,u)}$,
		condition~\eqref{eq:stochastic_kernel_consistency} becomes
		\[
		\int_{\Omega_T}\psi(H_t,U_t)
		\bigl[\varphi(X_{t+1})-\varphi(f(X_t,U_t))\bigr]\,d\mu=0
		\]
		for all $\psi$ and $\varphi$. This implies
		$X_{t+1}=f(X_t,U_t)$ $\mu$-almost surely for each $t$, hence graph
		support after intersecting over finitely many time indices.
		
		\begin{proposition}[Convexity and weak closedness]
			\label{prop:stochastic_convex_closed}
			Assume that $h$ is continuous and that each transition kernel
			$\mathcal{K}_t$ is Feller. Then
			$\mathcal{M}_{\mathcal{B}}^{\mathrm{st}}(\rho_0)$ is convex and weakly
			closed in $\calP(\Omega_T)$.
		\end{proposition}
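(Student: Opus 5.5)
The plan is to treat the three defining conditions of $\mathcal{M}_{\mathcal{B}}^{\mathrm{st}}(\rho_0)$ separately. Each condition will be shown to carve out a convex and weakly closed subset of $\calP(\Omega_T)$; the set itself is then their finite intersection, so it inherits both properties.

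\emph{Convexity.} This step is routine.
\begin{itemize}[leftmargin=*]
\item The initial-law constraint $(X_0)_{\#}\mu=\rho_0$ is affine in $\mu$.
\item The output constraint reads $\mu(\Gamma_t^y\text{-preimage})=1$, which is preserved under convex combinations, exactly as in Theorem~\ref{thm:convexity_closedness}.
\item The kernel-consistency identity \eqref{eq:stochastic_kernel_consistency} states that $\int F_{t,\psi,\varphi}\,d\mu=0$ for the fixed integrand
\[
F_{t,\psi,\varphi}(\omega):=\psi(H_t,U_t)\Bigl[\varphi(X_{t+1})-\textstyle\int\varphi(\xi)\,\mathcal{K}_t(d\xi\mid X_t,U_t)\Bigr].
\]
This is a linear condition in $\mu$, so it is preserved under convex combinations.
\end{itemize}

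\emph{Weak closedness.} Let $\mu^k\rightharpoonup\mu$ with every $\mu^k$ in the set.
\begin{itemize}[leftmargin=*]
\item The initial law passes to the limit by the argument already given at the end of the proof of Theorem~\ref{thm:convexity_closedness}.
\item The output constraint passes to the limit by applying Portmanteau to the closed graph set $\Gamma_t^y$, whose closedness comes from continuity of $h$, again as in that proof.
\item For kernel consistency, I would show that $F_{t,\psi,\varphi}\in C_b(\Omega_T)$. Then $0=\int F\,d\mu^k\to\int F\,d\mu$ directly from the definition of weak convergence.
\end{itemize}

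The continuity of $F_{t,\psi,\varphi}$ is the only substantive step. Boundedness is immediate, since $|F|\le 2\|\psi\|_\infty\|\varphi\|_\infty$. For continuity, observe the following.
\begin{itemize}[leftmargin=*]
\item The map $\omega\mapsto(H_t(\omega),U_t(\omega))$ is a coordinate projection and hence continuous, so $\psi(H_t,U_t)$ is continuous.
\item The term $\varphi(X_{t+1})$ is continuous for the same reason.
\item The map $(x,u)\mapsto\int\varphi\,d\mathcal{K}_t(\cdot\mid x,u)$ is continuous precisely because $\mathcal{K}_t$ is Feller. This is the definition of the Feller property: integrals of bounded continuous functions against the kernel depend continuously on $(x,u)$. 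Composing with the continuous map $(X_t,U_t)$ therefore gives a continuous function on $\Omega_T$.
\end{itemize}
Products and differences of bounded continuous functions are bounded continuous, so $F\in C_b(\Omega_T)$.

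I expect the main point of care to be the Feller step. The key observation is that closedness is proved directly from the weak form \eqref{eq:stochastic_kernel_consistency}. We never pass limits through regular conditional distributions, which do not behave well under weak convergence. The equivalence with the conditional-kernel formulation stated in Definition~\ref{def:stochastic_behavioral_measure} is then simply applied to the limit $\mu$.
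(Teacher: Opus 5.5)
Your proposal is correct and follows the paper's proof essentially step for step. Convexity comes from the affine or support-type nature of the three constraints. Weak closedness comes from showing $F_{t,\psi,\varphi}\in C_b(\Omega_T)$ via the coordinate projections and the Feller property, with the initial law handled by continuity of $X_0$ and the output condition by Portmanteau on the closed graph $\Gamma_t^y$.
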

		
		\begin{proof}
			Convexity is immediate because the initial-law constraint, the output
			condition, and the identities
			\eqref{eq:stochastic_kernel_consistency} are all affine in $\mu$.
			For weak closedness, fix $t$, $\psi\in C_b(\mathsf H_t\times\calU)$, and
			$\varphi\in C_b(\calX)$, and define
				\begin{equation*}
				F_{t,\psi,\varphi}(\omega):=
				\psi(H_t(\omega),U_t(\omega))
				\left[
				\varphi(X_{t+1}(\omega)) \right.\\
				\left.
				-\int_{\calX}\varphi(\xi)\,\mathcal{K}_t(d\xi\mid X_t(\omega),U_t(\omega))
				\right].
				\end{equation*}
			The maps $H_t$, $U_t$, $X_t$, and $X_{t+1}$ are continuous coordinate
			projections on the finite product space, and the Feller property implies that
			$(x,u)\mapsto\int_{\calX}\varphi(\xi)\,\mathcal{K}_t(d\xi\mid x,u)$ is
			continuous. Hence $F_{t,\psi,\varphi}$ is bounded and continuous on
			$\Omega_T$. If $\mu^k\rightharpoonup\mu$ with each
			$\mu^k\in\mathcal{M}_{\mathcal{B}}^{\mathrm{st}}(\rho_0)$, then
			\[
			\int_{\Omega_T}F_{t,\psi,\varphi}\,d\mu
			=
			\lim_{k\to\infty}\int_{\Omega_T}F_{t,\psi,\varphi}\,d\mu^k
			=0,
			\]
			so \eqref{eq:stochastic_kernel_consistency} is preserved under weak
			limits. The initial-law constraint is preserved because $X_0$ is a
			continuous coordinate projection. For the output condition, let
			$\sigma_t^k=(X_t,U_t,Y_t)_{\#}\mu^k$ and
			$\sigma_t=(X_t,U_t,Y_t)_{\#}\mu$. Then
			$\sigma_t^k\rightharpoonup\sigma_t$, and the graph
			$\Gamma_t^y=\{(x,u,y):y=h(x,u)\}$ is closed by continuity of $h$.
			Since $\sigma_t^k(\Gamma_t^y)=1$, Portmanteau gives
			$\sigma_t(\Gamma_t^y)=1$. Therefore
			$\mathcal{M}_{\mathcal{B}}^{\mathrm{st}}(\rho_0)$ is weakly closed.
		\end{proof}
		
		The resulting occupation formulation recovers the standard
		controlled-Markov-process flow constraints
		\cite{FlemingSoner2006,Altman1999}, but the present paper does not pursue
		the stochastic optimal-control theory beyond this structural extension.
		
		A different route to stochastic behavioral theory was pursued
		by Faulwasser et al. in~\cite{Faulwasser2023Behavioral}, where polynomial chaos
		expansions reduce a stochastic linear system to a deterministic
		behavioral problem in an expanded coefficient space. That approach
		operates on PCE coefficient trajectories rather than on probability
		measures over the original trajectory space, and is currently limited
		to linear dynamics.
		
			Extending the extremal characterization and duality results to the
			stochastic setting is a natural next step. Characterizing the extreme
			points of $\mathcal{M}_{\mathcal{B}}^{\mathrm{st}}(\rho_0)$ is closely
			related to the classical sufficiency of deterministic policies in
			Markov decision processes. A stochastic analogue of the measure-level
			Fundamental Lemma, however, remains open and likely requires tools
			beyond the Hankel framework.
		
		\section{Numerical Studies}
	\label{sec:numerics}
	
		This section contains three low-dimensional studies. The first study
		examines the feasible-set structure behind the scalar polynomial moment
		constraints. The second studies nonlinear control synthesis through a
		low-order moment-SOS relaxation, including a
		distributional-initial-condition variant. The third tests the
		measure-level Fundamental Lemma on simulated noise-free LTI data.
	
		\subsection{Scalar polynomial feasible-set structure}
	
	We consider the constrained scalar polynomial graph
	\begin{equation}
		\label{eq:poly_example}
		x_{t+1}=x_t^2+u_t,\qquad (x_t,u_t,x_{t+1})\in[-1,1]^3,
	\end{equation}
	so only state-input pairs satisfying $x_t^2+u_t\in[-1,1]$ are admissible.
	This one-step example makes the moment structure visible with minimal notation. Taking
	expectations yields the degree-one moment identity
	\begin{equation}
		\label{eq:moment_constraint}
		m^{(t)}_{0,0,1}=m^{(t)}_{2,0,0}+m^{(t)}_{0,1,0},
	\end{equation}
	where $m^{(t)}_{i,j,k} := \E[x_t^i\, u_t^j\, x_{t+1}^k]$ denotes the
	$(i,j,k)$-th mixed moment under $\mu$. This is only the lowest-order
	instance of the graph-ideal constraints. For
	$x_{t+1}=x_t^2+u_t$, the order-$r$ relaxation enforces
	\[
	m^{(t)}_{i,j,k+1}
	-
	m^{(t)}_{i+2,j,k}
	-
	m^{(t)}_{i,j+1,k}
	=0
	\]
	for all $i,j,k\ge0$ with $i+j+k+2\le 2r$. At relaxation order $r$, we collect
	all mixed moments of degree up to $2r$ into a moment vector
	$\bm{m}^{(t)} := (m^{(t)}_{i,j,k})_{i+j+k \leq 2r}$ and define the
	truncated moment set
	\begin{equation}
		\label{eq:lasserre_hierarchy}
		\begin{aligned}
			\mathcal{M}_{\mathcal{B}}^{(r)}
			:=
			\Bigl\{
			\bm{m}^{(t)} :\;
			A_{\mathrm{dyn}}\bm{m}^{(t)}=\bm{0},\;
			m^{(t)}_{0,0,0}=1,\;
			M_r(\bm{m}^{(t)})\succeq 0,\;
			M_{r-d_j}(g_j\,\bm{m}^{(t)})\succeq 0
			\Bigr\},
		\end{aligned}
	\end{equation}
	which imposes the usual truncated moment, localizing, normalization, and
	dynamics constraints from the Lasserre hierarchy; here
	$A_{\mathrm{dyn}}$ collects the truncated graph-ideal equalities above,
	the polynomials $g_j\ge0$ describe the compact semialgebraic support
	constraints, and $d_j=\lceil\deg(g_j)/2\rceil$
	\cite{Lasserre2008,Lasserre2010}.
	For~\eqref{eq:poly_example}, these support constraints include
	$1-x_t^2\ge0$, $1-u_t^2\ge0$, and $1-x_{t+1}^2\ge0$.
	
	Figure~\ref{fig:sos} shows the feasible sets
	$\mathcal{M}_{\mathcal{B}}^{(r)}$ for $r=1,2,3$, projected onto the
	two-dimensional plane $(\E[x_tu_t],\E[x_{t+1}])$. As the relaxation
	order increases, the feasible region shrinks toward the true
	behavioral-measure set: the projected area decreases from $3.3303$
	($r=1$) to $3.0841$ ($r=2$) to $2.7152$ ($r=3$).
	
	\begin{figure}[H]
		\centering
		\IfFileExists{fig_sos_convergence.pdf}{%
			\includegraphics[width=\columnwidth]{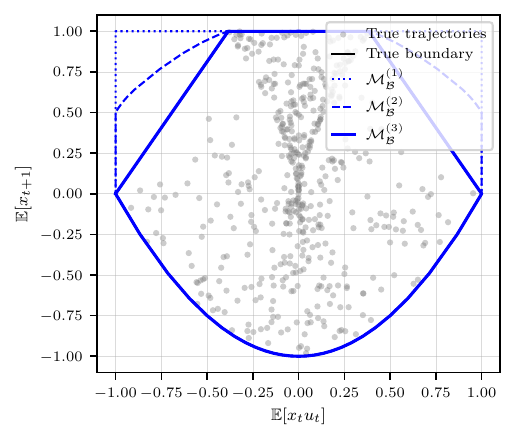}%
		}{%
			\rule{\columnwidth}{4.5cm}%
		}
		\caption{Scalar polynomial experiment for \eqref{eq:poly_example}.
			Feasible regions of $\mathcal{M}_{\mathcal{B}}^{(r)}$ projected onto
			$(\mathbb{E}[x_tu_t],\mathbb{E}[x_{t+1}])$ for $r=1$ (dotted),
			$r=2$ (dashed), and $r=3$ (solid), together with sampled true
			trajectories (gray). The exact boundary (black) is
			indistinguishable from the $r=3$ boundary.}
		\label{fig:sos}
	\end{figure}
	
	\subsection{Nonlinear control via a moment-SOS relaxation}
	
	The second experiment applies the occupation-measure framework of
	Section~\ref{sec:optimal_control} to the two-dimensional nonlinear system
	\begin{equation}
		\label{eq:nonlinear_control_system}
		\begin{aligned}
			x_{1,t+1} &= x_{1,t}+0.4x_{2,t}+0.2u_t,\\
			x_{2,t+1} &= 0.8x_{2,t}+u_t-0.3x_{1,t}^2,
			\qquad |u_t|\le 1,
		\end{aligned}
	\end{equation}
	with horizon $T=2$ and deterministic initial condition
	$x_0=(0.9,0.4)$. The cost to be minimized is
	\begin{equation}
		\label{eq:nonlinear_control_cost}
		\begin{aligned}
			J &= \sum_{t=0}^{T-1}\bigl(x_t^\top Qx_t + 0.05u_t^2\bigr)
			+ x_T^\top Q_f x_T,\\
			T&=2,\quad Q=\mathrm{diag}(1,0.5),\quad Q_f=\mathrm{diag}(4,2).
		\end{aligned}
	\end{equation}
	
	All moment relaxations used pre-specified compact box support constraints on
	the state and input variables.
	The order-$2$ Lasserre relaxation returns a numerically rank-one optimal moment matrix:
	the lower bound is $J_{\mathrm{SOS}}=3.8570$, and the extracted control
	sequence $(u_0^\star,u_1^\star)=(-1.000,\,0.691)$ attains the same cost on
	the nonlinear dynamics. A linearized-MPC baseline gives
	$J_{\mathrm{lin}}=4.2320$, about $9.7\%$ larger, illustrating the benefit
	of retaining the nonlinearity. Replacing the deterministic
	initial condition by
	$\rho_0 = \mathrm{Uniform}([0.7,1.1]\times[0.2,0.6])$, represented through
	its moments in the same order-$2$ relaxation, gives the relaxation value
	$J_{\mathrm{SOS,dist}}=4.0913$. Since no rank/extraction certificate is
	obtained in the distributional case and the uniform law is represented only
	through finitely many moments, this value should be interpreted as a
	relaxation value, equivalently a lower bound for the exact distributional
	problem under the stated moment relaxation. The value exceeds the
	deterministic optimum at the mean by $0.2343$, providing numerical evidence
	of a nontrivial distributional, or Jensen-type, gap.
	
	\subsection{Data-driven LTI validation}
	
	The third experiment validates the measure-level Fundamental Lemma
	(Theorem~\ref{thm:measure_level_FL}) and its degree-one corollary
	(Corollary~\ref{cor:fundamental_bridge}) on a SISO system:
	\begin{equation}
		\label{eq:lti_validation_system}
		\begin{aligned}
			x_{t+1}&=
			\begin{bmatrix}
				1 & 0.2\\
				-0.1 & 0.9
			\end{bmatrix}x_t
			+
			\begin{bmatrix}
				1\\
				0.5
			\end{bmatrix}u_t,
			\quad
			y_t=
			\begin{bmatrix}
				1 & 0
			\end{bmatrix}x_t.
		\end{aligned}
	\end{equation}
		We generate one simulated noise-free trajectory of length $N=80$ from a persistently exciting
		random input and build the Hankel matrix $H_L(w^d)$ with window length
		$L=6$ from the exact external signal $w_t=(u_t,y_t)$. The resulting Hankel
	matrix has size $12\times 75$ and numerical rank $8$, matching the
	expected behavioral dimension
	$\dim(\calB_L)=Ln_u+n_x=6\cdot 1 + 2 = 8$.
	
	\begin{figure}[H]
		\centering
		\IfFileExists{fig_data_driven_validation.pdf}{%
			\includegraphics[width=.92\textwidth]{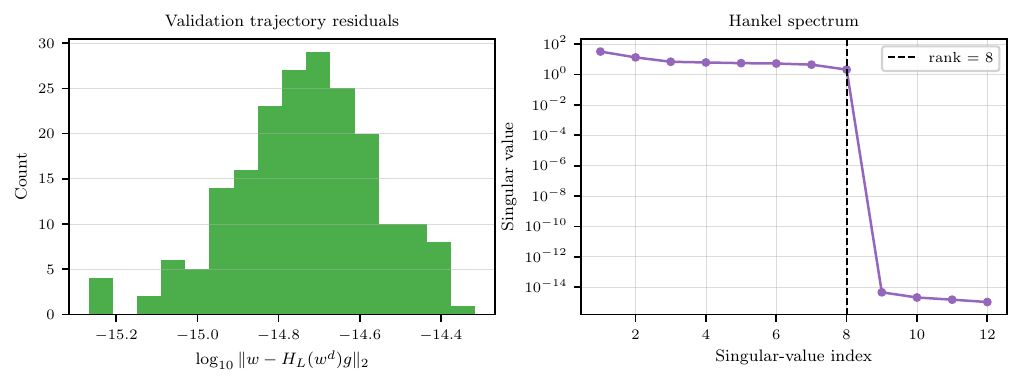}%
		}{%
			\rule{.92\textwidth}{4.8cm}%
		}
		\caption{Data-driven LTI validation for
			\eqref{eq:lti_validation_system}. Left: histogram of out-of-sample Hankel projection residuals (log$_{10}$
			scale) for $200$ validation trajectories. Right:
			singular values of the length-$6$ Hankel matrix built from the
			persistently exciting dataset, showing the expected rank-$8$
			truncation.}
		\label{fig:data_driven_validation}
	\end{figure}
	
	\emph{Trajectory-level validation.} We generate $200$ independent length-$6$ trajectories with random
	initial states and inputs. For each trajectory
	$w$, we compute the least-squares coefficient vector
	$g = \arg\min_g \|w - H_L(w^d)g\|_2$ and record the residual. The
	median residual is $1.88\times 10^{-15}$ and the maximum is
	$4.84\times 10^{-15}$, confirming that individual trajectories lie in
	the Hankel column space to machine precision.
	Figure~\ref{fig:data_driven_validation} displays the residual histogram
	and the Hankel spectrum.
	
	\emph{Degree-one validation.} Computing the empirical mean of $25$ of
	these trajectories and projecting onto the Hankel column space gives an
	affine-hull residual of $2.09\times 10^{-16}$, validating
	Corollary~\ref{cor:fundamental_bridge} at machine precision.
	
	\emph{Second-order validation.} To probe the measure-level Fundamental
	Lemma beyond first-order statistics, we verify the covariance identity
	from Remark~\ref{rem:higher_moments}. Using the canonical coefficient
	lift $g_i = H_L^{\dagger} w_i$ for each of the $N_{\mathrm{val}}=200$ validation
	trajectories, we compute the empirical covariances $\widehat{\mathrm{Cov}}[w]$ and
	$\widehat{\mathrm{Cov}}[g]$ from the trajectory and coefficient
	samples, respectively.
	Remark~\ref{rem:higher_moments} predicts
	$\widehat{\mathrm{Cov}}[w] = H_L\,\widehat{\mathrm{Cov}}[g]\,H_L^\top$.
	The measured relative Frobenius residual is
	\[
	\frac{\|\widehat{\mathrm{Cov}}[w]
		-H_L\,\widehat{\mathrm{Cov}}[g]\,H_L^\top\|_F}
	{\|\widehat{\mathrm{Cov}}[w]\|_F}
	\;\approx\; 7.7\times 10^{-16},
	\]
	confirming the identity at machine precision and validating the
	measure-level Fundamental Lemma at second order.
	
	\emph{Computational scaling.} The largest SDP blocks and Hankel dimensions
	remain modest in these examples; in general, for a local moment vector with
	$d$ variables, the number of monomials of degree at most $2r$ scales as
	$\binom{d+2r}{2r}$, leading to rapid growth in SDP block sizes.
	
	\begin{remark}[Non-polynomial dynamics]
		\label{rem:nonpoly}
		The behavioral-measure framework requires only the continuity and
		measurability assumptions stated in Assumption~\ref{ass:standing}.
		Polynomiality is used in the numerical experiments solely to obtain
		semidefinite outer approximations via the Lasserre hierarchy. For
		non-polynomial dynamics, alternative computational approaches
		include polynomial approximation of the dynamics followed by
		moment-SOS relaxation, sample-based approximation of the
		behavioral-measure set via empirical trajectory data, and Koopman-based
		linearization in lifted coordinates where the measure-level
		Fundamental Lemma applies directly.
	\end{remark}

	\section{Conclusions}
	\label{sec:conclusions}
	
	This paper formulates Willems' finite-horizon behavior at the level of
	probability measures on admissible trajectories. The resulting
	behavioral-measure set is convex, weakly closed, and has Dirac extreme
	points, while its occupation marginals recover the standard LP and
	Bellman descriptions of finite-horizon optimal control.

	For controllable LTI systems, the measure-level Fundamental Lemma yields
	an exact Hankel factorization of trajectory distributions, recovering the
	classical Fundamental Lemma in the Dirac case and supporting the
	distributional DeePC interpretation of Remark~\ref{rem:deepc}. The
	stochastic extension identifies the appropriate history-conditional kernel
	consistency condition; extending the measure-level data-driven theory
	beyond deterministic LTI systems remains open.
	
	
		\bibliographystyle{IEEEtran}
	\bibliography{refs}
	
\end{document}